\theoremstyle{plain}
\newtheorem{thm}[subsection]{Theorem}
\newtheorem{prop}[subsection]{Proposition}
\newtheorem{cor}[subsection]{Corollary}
\theoremstyle{definition}
\newtheorem{rk}[subsection]{Remark}
\newtheorem{definition}[subsection]{Definition}
\newtheorem{ex}[subsection]{Example}
\numberwithin{equation}{section}
\newcommand{\G}{{\Gamma}}
\newcommand{\F}{{\mathcal F}}
\newcommand{\A}{{\mathcal A}}
\DeclareMathOperator{\Gr}{Gr}
\DeclareMathOperator{\Fil}{Fil}
\DeclareMathOperator{\Frob}{Frob}
\DeclareMathOperator{\Gal}{Gal}
\DeclareMathOperator{\Trace}{Trace}
\DeclareMathOperator{\Tr}{Trace}
\newcommand{\bF}{{\mathbf F}}
\newcommand{\bW}{{\mathbf W}}
\newcommand{\fp}{{\mathfrak p}}
\newcommand{\bbF}{{\mathbb F}}
\newcommand{\CO}{{\mathcal O}}
\newcommand\beq{\begin{equation}}
\newcommand\ee{\end{equation}}
\newcommand{\Fq}{{\bbF_q}}
\newcommand\lr{{\longrightarrow\;}}
\newcommand{\al}{{\alpha}}
\newcommand{\be}{{\beta}}
\newcommand\wt{\widetilde}
\newcommand{\inv}{^{-1}}
\newcommand{\GL}{{\text{GL}}}
\newcommand{\Z}{\mathbb{Z}}
\newcommand{\Q}{\mathbb{Q}}
\newcommand{\R}{\mathbb{R}}
\newcommand{\C}{\mathbb{C}}
\newcommand{\PP}{\mathbb{P}}
\DeclareMathOperator{\coker}{coker}
\begin{document}

\title [Hodge-Deligne polynomials and  hyperplane arrangements]
{Hodge-Deligne equivariant polynomials and monodromy of hyperplane arrangements }

\author[Alexandru Dimca]{Alexandru Dimca$^{1,2}$}
\address{  Laboratoire J.A. Dieudonn\'e, UMR du CNRS 6621,
                 Universit\'e de Nice Sophia Antipolis,
                 Parc Valrose,
                 06108 Nice Cedex 02,
                 France}
\email{dimca@unice.fr}

\author[Gus Lehrer]{Gus Lehrer$^{2}$}
\address{ School of Mathematics and Statistics F07,
University of Sydney, NSW 2006, 
Australia  }
\email{gustav.lehrer@sydney.edu.au}

\thanks{$^1$ Partially supported by the  ANR-08-BLAN-0317-02 (SEDIGA)}
\thanks{$^2$ Partially supported by Australian Research Council Grants DP0559325 and DP110103451} 

\subjclass[2010]{Primary 32S22, 32S35; Secondary 32S25, 32S55.}

\keywords{hyperplane arrangement, Milnor fiber, monodromy, equivariant Hodge-Deligne polynomial, rational points. }

\begin{abstract} We investigate the interplay between the monodromy and the Deligne mixed Hodge structure
on the Milnor fiber of a homogeneous polynomial. In the case of hyperplane arrangement Milnor fibers,
we obtain a new result on the possible weights. For line arrangements, we prove in a new way the fact 
due to Budur and Saito that
the spectrum is determined by the weak combinatorial data, and show that such a result fails for the 
Hodge-Deligne polynomials. In an appendix, we also establish a connection between the Hodge-Deligne polynomials
and rational points over finite fields.

\end{abstract}

\maketitle


\section{Introduction} \label{sec:intro}

Let $E=\C^{n+1}$, with $\A$ a finite set of hyperplanes through $0$ in $E$, 
$Z= Z_\A=\cup_{H\in \A}H$ and
$
N=N_\A=E\setminus Z 
$ the corresponding complement. To keep notation simple, we will also denote by
$\A$ the associated hyperplane arrangement in the complex projective space $\PP^n$ 
and use $H$ for an affine hyperplane in $E$ and also for the 
associated projective hyperplane in $\PP^n$ . 
 
 Let $Q=0$ be a reduced equation for the union 
 $V =\cup_{H \in \A}H \subset \PP^n $ of (projective) hyperplanes in 
 $\A$ and $F=\{x\in \C^{n+1}~~|~~Q(x)=1\}$  the associated Milnor fiber. If $d=|\A|$ is the number 
 of hyperplanes in $\A$, then $d=\deg Q$ and there is a monodromy isomorphism
\begin{equation} 
\label{eq1}
h:F\to F,~~~~ h(x)=\lambda \cdot x,
\end{equation}
with $\lambda=\exp(2\pi i/d)$. 
This may be regarded as an action on $F$ of the cyclic group $\mu_d$ generated by $\lambda$.
Any irreducible representation of $\mu_d$ is one-dimensional, and has the form $\C_\alpha$ ($\alpha\in\mu_d$),
where for $1\in\C_\alpha$, $\lambda\cdot 1=\alpha$. If $V$ is any $\mu_d$-module, we shall denote by $V_\beta$
($\beta\in\mu_d$) its $\C_\beta$-isotypic component.

It is an open question whether the Betti numbers $b_j(F)$ of $F$ or, more generally, the dimension of the isotypic
components $H^j(F,\C)_{\be}$ for $0<j\leq n$ and $\be \in \mu_d$, are determined by the combinatorics of $\A$. This is a natural question, since the cohomology algebra $H^*(M,\Z)$ of the complement $M=\PP^n \setminus V$ is known to be determined by the combinatorics of $\A$, see \cite{OT}. The same applies to $N=M\times \C ^\times$. In particular, using the degree $d$ covering projection $p:F \to M$,
we see that $\chi(F)=d \cdot \chi(M)$ is determined by the combinatorics. 

A recent result of Budur and Saito in \cite{BS} asserts that a related invariant, the spectrum of a
hyperplane arrangement $\A$ in $\PP^n$ defined by

\begin{equation} 
\label{eq2}
Sp(\A)=\sum_{\al \in \Q}m_{\al}t^{\al},
\end{equation}
with $m_{\al}=\sum_j(-1)^{j-n}\dim Gr_F^p\tilde H^j(F,\C)_{\be}$ where $p=[n+1-\al]$ and
$\be=\exp(-2\pi i\al)$, is  also determined by the combinatorics.

On the other hand, it was shown in \cite{DP} and in \cite{BDS}, that for $n=2$ (i.e. for a line arrangement) the eigenspace decomposition
\begin{equation} 
\label{eq3}
H^1(F,\Q)=H^1(F,\Q)_1\oplus H^1(F,\Q)_{\ne 1},
\end{equation}
is actually a decomposition of mixed Hodge structures (denoted henceforth by MHS) such that $H^1(F,\Q)_1=p^*(H^1(M,\Q))$ is pure of type $(1,1)$, and $H^1(F,\Q)_{\ne 1}$
is pure of weight 1.

One may consider a more general setting where the union of hyperplanes $V$
is replaced by a degree $d$ hypersurface $V$ in $\PP^n$  given by a reduced equation $Q_V(x)=0$, the associated Milnor fiber $F_V$ being defined by $Q_V(x)=1$, and ask which of the above results remain true.
It is natural to replace the combinatorics of $\A$ by the local type of singularities of $V$ and the topology of the dual complex associated to a normal crossing exceptional divisor arising in the resolution of singularities for $V_{sing}$ as in \cite{ABW}.
Some questions have easy negative answers in this more general setting, for instance the classical example of Zariski of cuspidal sextics implies that $b_1(F)$ depends on the position of singularies in general.

\bigskip

In this paper we investigate the relationship between the monodromy action and the
MHS on the Milnor fiber cohomology in this more general setting. To do this, we first refine the equivariant weight polynomials introduced in \cite{DL} to get the equivariant Hodge-Deligne polynomials $P^{\G}(X)$
associated to a finite group $\G$ acting (algebraically) on a complex algebraic variety $X$. 

More precisely, let $X$ be a quasi-projective variety over $\C$ and consider the Deligne MHS on the rational cohomology groups
$H^*(X,\Q)$ of $X$. Since this MHS is functorial with respect to algebraic mappings, if $\G$ acts
algebraically on $X$, each of the graded pieces
\begin{equation} 
\label{eq6}
H^{p,q}(H^j(X,\C)):=Gr_F^pGr^W_{p+q}H^j(X,\C)
\end{equation}
becomes a $\G$-module, and these modules are the building blocks of the polynomial $P^{\G}(X)$. In this situation we refer to $H^*(X,\Q)$ as being a graded $\G$-MHS.

This viewpoint is applied to the hypersurface $X_V$, the projective closure of $F_V$, given by the equation
\begin{equation} 
\label{eq4}
Q_V(x)-t^d=0
\end{equation}
in $\PP^{n+1}$.
The main results can be stated as follows.

\begin{thm}
\label{thm1}

Let $H^*_0(X_V,\Q)=\coker\{H^*( \PP^{n+1} ,\Q) \to H^*(X_V,\Q)\}$ be the primitive cohomology of $X_V$,
where the morphism is induced by the inclusion $i:X_V \to \PP^{n+1}$. Then, for any hypersurface $V \subset \PP^n$, there are natural isomorphisms
of $\mu_d$-MHS 
$$H^j(F_V,\Q)_{\ne 1}=H^{2n-j}_0(X_V,\Q)^{\vee}(-n)$$
for any $0< j\leq n$. 
\end{thm}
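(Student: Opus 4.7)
The plan is to compactify $F_V$ inside $X_V$, transfer cohomological data to $X_V$ via the long exact sequence of the pair $(X_V,V)$ on the $\neq 1$-isotypic component, and finish with Poincar\'e duality on the smooth affine variety $F_V$. To begin, the inclusion $\C^{n+1}\hookrightarrow\PP^{n+1}$, $x\mapsto[x:1]$, identifies $F_V$ with $X_V\cap\{t\neq 0\}$ and its complement in $X_V$ with $V\subset\{t=0\}\cong\PP^n$. The monodromy extends to the algebraic $\mu_d$-action $[x:t]\mapsto[x:\lambda^{-1}t]$ on $X_V$, which fixes $V$ pointwise and realizes the projection $\pi:X_V\to\PP^n$, $[x:t]\mapsto[x]$, as the cyclic $d$-fold cover of $\PP^n$ branched along $V$; in particular $X_V/\mu_d\cong\PP^n$.

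Taking the $\neq 1$-isotypic part (an exact functor on $\mu_d$-MHS) of the long exact sequence
\[
\cdots\to H^{j-1}(V,\Q)\to H^j_c(F_V,\Q)\to H^j(X_V,\Q)\to H^j(V,\Q)\to\cdots
\]
and using $H^*(V)_{\neq 1}=0$ yields a natural $\mu_d$-MHS isomorphism $H^j_c(F_V)_{\neq 1}\cong H^j(X_V)_{\neq 1}$. Because $\mu_d$ acts trivially on $H^*(\PP^{n+1})$ (the action on $\PP^{n+1}$ factors through the connected group $\C^*$), the image of $i^*$ lies in $H^*(X_V)_1$, so $H^j(X_V)_{\neq 1}=H^j_0(X_V)_{\neq 1}$.

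The step I expect to be the main obstacle is the vanishing $H^r_0(X_V)_1=0$ for $0\leq r\leq 2n$, which will upgrade the previous isomorphism to $H^{2n-j}_0(X_V)=H^{2n-j}_0(X_V)_{\neq 1}$ in the range $0<j\leq n$. By the transfer principle, $H^r(X_V)_1=H^r(X_V)^{\mu_d}$ is identified with $H^r(X_V/\mu_d)=H^r(\PP^n)$ as MHS. Writing $\PP^{n+1}/\mu_d\cong\PP(1,\ldots,1,d)$ with quotient map $q$, and embedding $X_V/\mu_d$ as $\{Q_V(x)-s=0\}\cong\PP^n$ in this weighted projective space via a map $\iota$, the relation $q\circ i=\iota\circ\pi$ identifies the composite $H^r(\PP^{n+1})\xrightarrow{i^*}H^r(X_V)\to H^r(X_V)_1$ with $\iota^*:H^r(\PP(1,\ldots,1,d))\to H^r(\PP^n)$ (after the rational isomorphism $(q^*)^{-1}$). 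Since $\PP^n$ is a degree-$d$ divisor in $\PP(1,\ldots,1,d)$ avoiding its singular point and $\iota^*$ sends hyperplane class to hyperplane class, a dimension count shows $\iota^*$ is an isomorphism for $r\leq 2n$; taking cokernels gives the vanishing. This detour through the weighted projective quotient is necessary because $X_V$ is in general singular along the cone over $V_{\mathrm{sing}}$, so Lefschetz cannot be applied to $X_V$ directly.

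Finally, Poincar\'e duality on the smooth $F_V$ of complex dimension $n$ gives the natural $\mu_d$-MHS isomorphism $H^j(F_V)\cong H^{2n-j}_c(F_V)^\vee(-n)$. Taking $\neq 1$-parts (using $(W^\vee)_{\neq 1}=(W_{\neq 1})^\vee$, since $\beta\mapsto\beta^{-1}$ preserves $\mu_d\setminus\{1\}$) and chaining the isomorphisms of the previous paragraphs produces
\[
H^j(F_V)_{\neq 1}\cong H^{2n-j}_c(F_V)_{\neq 1}^\vee(-n)\cong H^{2n-j}_0(X_V)_{\neq 1}^\vee(-n)\cong H^{2n-j}_0(X_V)^\vee(-n),
\]
completing the argument.
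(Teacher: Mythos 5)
Your proof is correct and follows essentially the same route as the paper: the long exact sequence of the pair $(X_V,V)$ together with the triviality of the $\mu_d$-action on $V$ and on $H^*(\PP^{n+1})$, the identification $X_V/\mu_d\cong\PP^n$ to kill $H^*_0(X_V)_1$, and Poincar\'e duality on the smooth affine $F_V$. The only differences are cosmetic: you take the $\neq 1$-isotypic part directly where the paper uses a dimension count via Alexander duality, and you supply (via the weighted projective quotient) a justification for the vanishing $H^*_0(X_V)_1=0$ that the paper merely asserts.
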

Here, for a MHS $H$, we denote by $H^{\vee}$ the dual MHS, and $H(m)$ denotes the Tate twist, see \cite{PS} for details. We can restate this result more explicitly using the equivariant Hodge-Deligne numbers $h^{p,q}(H,\al)$,
see Example \ref{ex1} for the definition and the basic properties of these numbers.

\begin{cor}
\label{cor1}

For any hypersurface $V \subset \PP^n$ of degree $d$, any $\al \in \mu_d$, $\al \ne 1$ and  any $0< j\leq n$, 
$$h^{p,q}(H^j(F_V,\C),\al)=h^{n-p,n-q}(H^{2n-j}_0(X_V,\C), \overline \al)= h^{n-q,n-p}(H^{2n-j}_0(X_V,\Q), \al).$$ 

\end{cor}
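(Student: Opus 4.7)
The plan is to derive the corollary by applying Theorem \ref{thm1} and then tracking the effect on equivariant Hodge-Deligne numbers of the three operations it involves: the Tate twist by $-n$, linear duality, and passage from $\C$-coefficients to $\Q$-coefficients. For this I will use three elementary rules for the numbers $h^{p,q}(H,\al)$ attached to a $\mu_d$-MHS $H$. Firstly, the Tate twist rule
$$h^{p,q}(H(m),\al)=h^{p+m,q+m}(H,\al),$$
which is immediate because the Tate twist shifts both the Hodge and weight filtrations in the expected way. Secondly, the duality rule
$$h^{p,q}(H^{\vee},\al)=h^{-p,-q}(H,\al^{-1})=h^{-p,-q}(H,\overline{\al}),$$
where the shift $(p,q)\mapsto(-p,-q)$ comes from linear duality of the Hodge and weight filtrations, and the passage to $\overline{\al}=\al^{-1}$ comes from the fact that the contragredient $\mu_d$-representation inverts the action. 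Thirdly, for a $\mu_d$-MHS $H$ whose underlying space is defined over $\Q$, the conjugation rule
$$h^{p,q}(H,\al)=h^{q,p}(H,\overline{\al}),$$
reflecting the antilinear symmetry of the Hodge decomposition on a rational MHS together with the fact that complex conjugation exchanges the $\al$-isotypic and $\overline{\al}$-isotypic subspaces.

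With these rules in hand, the first asserted equality follows by applying Theorem \ref{thm1} to identify the $\al$-isotypic component of $H^j(F_V,\C)$ with that of $H^{2n-j}_0(X_V,\C)^{\vee}(-n)$ (valid for $\al\neq 1$ and $0<j\leq n$), then invoking the Tate twist rule with $m=-n$ followed by the duality rule:
$$h^{p,q}(H^j(F_V,\C),\al)=h^{p-n,q-n}(H^{2n-j}_0(X_V,\C)^{\vee},\al)=h^{n-p,n-q}(H^{2n-j}_0(X_V,\C),\overline{\al}).$$
The second asserted equality is then the conjugation rule applied to the rational $\mu_d$-MHS $H^{2n-j}_0(X_V,\Q)$, with $(p,q)$ replaced by $(n-p,n-q)$ and $\al$ replaced by $\overline{\al}$; note that the switch from $\C$ to $\Q$ in the coefficients is precisely what licenses the use of this rule.

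I expect no substantive obstacle: the corollary is essentially a bookkeeping exercise once Theorem \ref{thm1} is in place. The only point requiring genuine care is the sign conventions in the duality rule, namely verifying that linear duality shifts types as $(p,q)\mapsto(-p,-q)$ and inverts the $\mu_d$-character, so that the combined effect of dual and twist by $(-n)$ indeed produces the pair $(n-p,n-q)$ together with $\overline{\al}$. These conventions are fixed once and for all in the framework for equivariant Hodge-Deligne polynomials set up earlier in the paper (cf.~Example~\ref{ex1}), so the computation proceeds without incident.
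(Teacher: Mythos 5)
Your proposal is correct and coincides with the paper's own (largely implicit) argument: the corollary is obtained by applying Theorem \ref{thm1} and then the three bookkeeping rules \eqref{eq5.1} and \eqref{eq5.2} for Tate twist, duality, and conjugation, exactly as you do. The computation $(p,q)\mapsto(p-n,q-n)\mapsto(n-p,n-q)$ with $\al\mapsto\overline{\al}$, followed by the conjugation symmetry for the rational structure, is precisely what the paper intends by ``restating'' Theorem \ref{thm1} via Example \ref{ex1}.
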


\begin{thm}
\label{thm2}
Let $F$ be the Milnor fiber of a hyperplane arrangement  $\A$ in $\PP^n$. Then
$$Gr^W_{2j}H^j(F,\Q)_{\ne 1}=0$$
for any $0<j\leq n$.

\end{thm}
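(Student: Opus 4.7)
The plan is to apply Theorem~\ref{thm1} to transfer the vanishing from the Milnor fiber $F$ to the projective variety $X_V$, and then establish the translated vanishing by resolving $X_V$ via a wonderful compactification adapted to the arrangement $\A$.

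First, Theorem~\ref{thm1} gives an isomorphism of $\mu_d$-MHS $H^j(F,\Q)_{\ne 1}\cong H^{2n-j}_0(X_V,\Q)^{\vee}(-n)$. Because the Tate twist $(-n)$ shifts weights by $+2n$ and duality reverses weights,
$$Gr^W_{2j}H^j(F,\Q)_{\ne 1}\cong\bigl(Gr^W_{2(n-j)}H^{2n-j}_0(X_V,\Q)_{\ne 1}\bigr)^{\vee}.$$
On the $\ne 1$ isotypic part the primitive cohomology coincides with $H^*(X_V,\Q)$, since the image of $H^*(\PP^{n+1},\Q)$ is $\mu_d$-invariant. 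Setting $r=n-j\in[0,n-1]$, the theorem reduces to showing
$$Gr^W_{2r}\,H^{n+r}(X_V,\Q)_{\ne 1}=0\quad\text{for }0\le r<n.$$

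Second, I build a $\mu_d$-equivariant resolution of $X_V$. Let $\sigma\colon Y\to\PP^n$ be the De Concini--Procesi wonderful model, obtained by blowing up the edges of $\A$ in order of increasing dimension, so that $\sigma^{-1}(V)_{\mathrm{red}}$ is a simple normal crossings divisor on $Y$. Normalizing the pullback of the cyclic cover $X_V\to\PP^n$ along $\sigma$ yields a smooth projective $\mu_d$-variety $\widetilde{X_V}$ and a $\mu_d$-equivariant birational morphism $\pi\colon\widetilde{X_V}\to X_V$. Set $\Sigma=V_{\mathrm{sing}}$ and $E=\pi^{-1}(\Sigma)$. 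Since $H^{n+r}(\widetilde{X_V},\Q)$ is pure of weight $n+r>2r$, and $\mu_d$ acts trivially on $\Sigma\subset\{t=0\}$ so that $H^*(\Sigma,\Q)_{\ne 1}=0$, the Mayer--Vietoris sequence of $\mu_d$-MHS
$$\cdots\to H^{n+r-1}(E,\Q)\to H^{n+r}(X_V,\Q)\to H^{n+r}(\widetilde{X_V},\Q)\oplus H^{n+r}(\Sigma,\Q)\to\cdots$$
forces every weight-$2r$ class in $H^{n+r}(X_V,\Q)_{\ne 1}$ to come from $H^{n+r-1}(E,\Q)_{\ne 1}$.

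Third, I analyze $E$ via its stratification by the edges of $\A$. Each stratum is a $\mu_d$-equivariant cyclic cover of a projective bundle over an intersection of hyperplanes, whose fiber direction carries the Milnor-fiber data of the link arrangement at the corresponding edge. A Leray spectral sequence together with the projective bundle formula expresses the weight-$2r$ part of $H^{n+r-1}(E,\Q)_{\ne 1}$ in terms of top-weight classes of link-arrangement Milnor fibers in strictly lower dimension, and an induction on $n$, with base case $n=1$ (trivial) or $n=2$ (the line-arrangement result of \cite{DP,BDS}), closes the argument. The main obstacle is precisely this third step: carefully identifying each stratum of $E$ with such a projective bundle and tracking the $\mu_d$-equivariant weight data through the resulting spectral sequences so that the remaining contributions land in precisely the bidegrees to which the inductive hypothesis applies.
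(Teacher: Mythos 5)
Your first two steps are sound: the translation via Theorem~\ref{thm1} to the statement $Gr^W_{2r}H^{n+r}(X_V,\Q)_{\ne 1}=0$ for $0\le r<n$ is correct (weights shift by $+2n$ under $(-n)$ and are negated under duality), the identification $H^*_0(X_V)_{\ne 1}=H^*(X_V)_{\ne 1}$ is fine, and the discriminant-square exact sequence does reduce the problem to killing the weight-$2r$ part of $H^{n+r-1}(E,\Q)_{\ne 1}$. (One caveat even here: the normalization of the cyclic cover pulled back to the wonderful model is branched along a normal crossing divisor with multiplicities, so it is in general only a variety with abelian quotient singularities, not smooth; purity of $H^{n+r}(\widetilde{X_V},\Q)$ still holds because such a variety is a projective $\Q$-homology manifold, but this needs to be said rather than asserted via smoothness.) The genuine gap is your third step, which you yourself flag as ``the main obstacle'': the stratification of $E$, the identification of each stratum as a cyclic cover of a projective bundle carrying the Milnor-fiber data of a localized arrangement, and the bookkeeping of weights and $\mu_d$-isotypic components through the Leray spectral sequences are exactly where the theorem's content lives, and none of it is carried out. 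In particular it is not clear that the weight-$2r$ contributions from $H^{n+r-1}(E)_{\ne 1}$ land only in bidegrees governed by top-weight classes of strictly lower-dimensional link arrangements --- since $\dim E\le n-1$, weight $2r$ in $H^{n+r-1}(E)$ is not excluded for any $r\le n-1$, so the spectral-sequence analysis cannot be waved away. As written this is a plausible research plan, not a proof.

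For comparison, the paper's argument avoids $X_V$ entirely. It first reduces to $j=n$ by a generic hyperplane section and the affine Lefschetz theorem, then invokes a lemma of Saito: if $Gr^W_{2n}H^n(F,\C)_{\al}\ne 0$ then $N^n\ne 0$ on the nearby cycles $\psi_{Q,\al}\C_Z$, $Z=\C^{n+1}$, where $N$ is the logarithm of the unipotent monodromy. It then takes the canonical embedded resolution of the cone over $V$ obtained by blowing up the edges of $\A$ in increasing order of dimension, and uses the key combinatorial fact (Proposition (2.3) of Budur--Saito) that components of the resulting normal crossing divisor meet only along flags of edges, so the divisor has multiplicity at most $n$ at every point; this bounds the nilpotency order of $N$ by $n$ and gives $N^n=0$, a contradiction. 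Interestingly, the same wonderful-model combinatorics you want to exploit in step three is what powers the paper's proof --- but applied to bound $N$ on nearby cycles rather than to stratify an exceptional divisor, which short-circuits all of the spectral-sequence analysis your plan still owes.
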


\begin{rk}
\label{rem1}

(i) For any hyperplane arrangement $\A$ in $\PP^n$ and any $j \geq 0$, it is known that $H^j(F,\Q)_1=p^*(H^j(M,\Q))$ is pure of type $(j,j)$, see \cite{L1}.
That is why we consider here only the summand $H^*(F,\Q)_{\ne 1}$.

(ii) The result in  Theorem \ref{thm2} is optimal, since even for a line arrangement $H^2(F,\Q)$ may have weights 2 and 3, see Example \ref{ex3} below. Moreover, this result does not hold for a general hypersurface $V$, in fact not even for $V$ an irreducible curve, see Example \ref{ex4} below.

\end{rk}

We have the following more precise result for some hypersurfaces $V$.

 \begin{thm}
\label{thm2'}
Let $F_V$ be the Milnor fiber of a hypersurface $V$ in $\PP^n$ having only isolated  singularities. Then the following hold.

\noindent (i) $\tilde H^j(F_V,\Q)=0$ for $0 \leq j \leq n-2$;

\noindent (ii) $H^{n-1}(F_V,\Q)_{\ne 1}$ is a pure Hodge structure of weight $n-1$;

\noindent (iii) If in addition the singularities of $V$ are weighted homogeneous, then $H^{n}(F_V,\Q)_{\ne 1}$ is a MHS with weights $n$ and $n+1$, i.e. $Gr^W_{j}H^n(F,\Q)_{\ne 1}=0$ for $j>n+1$.

\end{thm}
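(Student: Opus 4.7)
The plan is to treat the three parts in succession, with (i) following from classical Milnor fiber theory and (ii), (iii) derived from Theorem \ref{thm1} combined with an analysis of the MHS on the projective hypersurface $X_V$.

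For (i), note first that $F_V=\{Q_V=1\}\subset\C^{n+1}$ is a smooth affine variety of pure complex dimension $n$ (smoothness follows from the Euler identity $dQ_V(x)\cdot x=d\cdot Q_V(x)$, which excludes critical points of $Q_V$ on $\{Q_V=1\}$). Since $V$ has only isolated singularities, the critical locus of $Q_V$ on $\C^{n+1}$ is one-dimensional, namely the affine cone over $V_{sing}$. The classical connectivity theorem for Milnor fibers of polynomials with a one-dimensional critical locus, as exposed in Dimca's book \emph{Singularities and Topology of Hypersurfaces}, Ch.~5, then yields that $F_V$ is $(n-2)$-connected.

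For (ii), Theorem \ref{thm1} specializes at $j=n-1$ to the $\mu_d$-MHS isomorphism
$$
H^{n-1}(F_V,\Q)_{\ne 1}\cong H^{n+1}_0(X_V,\Q)^{\vee}(-n),
$$
so the assertion reduces to showing that $H^{n+1}(X_V)$ is pure of weight $n+1$. Now $X_V\subset\PP^{n+1}$ has complex dimension $n$ and singular locus $V_{sing}\times\{t=0\}$, which is finite; in particular $X_V$ has only isolated singularities. Fix a resolution $\pi:\widetilde{X}_V\to X_V$ with exceptional divisor $E$ of complex dimension at most $n-1$, and inspect the Mayer--Vietoris fragment
$$
\cdots\to H^n(E)\to H^{n+1}(X_V)\to H^{n+1}(\widetilde{X}_V)\to H^{n+1}(E)\to\cdots.
$$
The purity of weight $n+1$ on $H^{n+1}(\widetilde{X}_V)$, combined with the projectivity bound $w\le n+1$ on $H^{n+1}(X_V)$ and the control on contributions from $E$ (a projective variety of dimension $\le n-1$), forces $H^{n+1}(X_V)$ to be pure of weight $n+1$. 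Dualizing and applying the Tate twist $(-n)$ finishes (ii).

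For (iii), Theorem \ref{thm1} at $j=n$ identifies
$$
H^n(F_V,\Q)_{\ne 1}\cong H^n_0(X_V,\Q)^{\vee}(-n),
$$
so the claim reduces to the weights of $H^n(X_V)$ lying in $\{n-1,n\}$. The analogous Mayer--Vietoris fragment
$$
H^{n-1}(E)\to H^n(X_V)\to H^n(\widetilde{X}_V)\oplus H^n(\Sigma)\to H^n(E)
$$
presents $H^n(X_V)$ as an extension whose quotient is a sub-MHS of the pure weight-$n$ module $H^n(\widetilde{X}_V)$ (using that $H^n(\Sigma)=0$ for $n\ge 1$), and whose kernel is a quotient of $H^{n-1}(E)$, which a priori has weights $\le n-1$. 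The main obstacle, and the precise point where the weighted homogeneity hypothesis enters, is sharpening this to weight exactly $n-1$: one invokes Steenbrink's purity theorem for the vanishing cohomology of an isolated weighted homogeneous hypersurface singularity, which permits a choice of resolution in which the component of $E$ above each singular point is smooth projective, so that $H^{n-1}(E)$ is pure of weight $n-1$.
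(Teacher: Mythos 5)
Your part (i) and your reductions in (ii) and (iii) are exactly the paper's route: (i) is Kato--Matsumoto applied to the homogeneous polynomial $Q_V$, whose critical locus is the (at most one-dimensional) cone over $V_{sing}$, and (ii), (iii) reduce via Theorem \ref{thm1} to showing that $H^{n+1}_0(X_V)$ is pure of weight $n+1$ and that $H^n_0(X_V)$ has weights only $n-1$ and $n$. The paper simply quotes Steenbrink \cite{St} for the first of these facts and \cite{D1} (via the discussion at the end of Section 3) for the second. You instead try to reprove both by a Mayer--Vietoris argument for a resolution $\pi:\widetilde{X}_V\to X_V$, and it is there that the gaps lie.

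In (ii), the kernel of $H^{n+1}(X_V)\to H^{n+1}(\widetilde{X}_V)$ is the image of the connecting map from $H^n(E)$; that image has weights $\le n$ and is precisely $W_nH^{n+1}(X_V)$, i.e.\ exactly the subspace you must show vanishes. Neither the bound $w\le n+1$, nor the purity of $H^{n+1}(\widetilde{X}_V)$, nor the fact that $\dim E\le n-1$ controls it: $H^n(E)$ is typically nonzero, and what is really needed is the vanishing of the connecting map (equivalently, surjectivity of $H^n(\widetilde{X}_V)\to H^n(E)$). This is equivalent to the semipurity of the links of the isolated singularities of $X_V$ (Durfee--Steenbrink), or to Steenbrink's theorem on the vanishing cohomology --- a genuine theorem, not a formal consequence of the exact sequence; at that point one may as well cite \cite{St} directly, as the paper does. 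In (iii), the assertion that weighted homogeneity ``permits a choice of resolution in which the component of $E$ above each singular point is smooth'' is false --- already the weighted homogeneous surface singularity $x^2+y^3+z^5$ has eight exceptional curves in its minimal good resolution --- and purity of $H^{n-1}(E)$ does not follow from Steenbrink's purity of the vanishing cohomology by the route you indicate. The correct local input, stated in Section 3 of the paper, is that for a weighted homogeneous isolated hypersurface singularity the link cohomology $H^{n-1}(K_s)$ is pure of weight $n-1$; feeding this into the localization sequence $H^n_{\Sigma}(X_V)\to H^n(X_V)\to H^n(X_V^*)$ (rather than your resolution sequence), and using that $H^n(X_V^*)$ has weights $\ge n$ because $X_V^*$ is smooth, gives $W_{n-2}H^n(X_V)=0$ and completes (iii).
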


The first part of the next Theorem  gives a new proof of a result already present in \cite{BS}.
For a line arrangement $\A$, let us refer to the following as the weak combinatorial data: 
$d$, the number of lines in $\A$ and $m_k$, the numbers  of points of multiplicity
$k$ in $V$, for all $k \geq 2$.

\begin{thm}
\label{thm3}
For a line arrangement $\A$, one has the following.

\noindent (i) The spectrum $Sp(\A)$ is determined by the weak combinatorial data. More generally, the spectrum $Sp(V)$ of a hypersurface $V$ having only isolated singularities is determined by $d=deg(V)$ and by the local type of its singularities.

\noindent (ii) The Hodge-Deligne polynomial $P^{\mu_d}(F)$ corresponding to the monodromy action on $F$
is not determined by the weak combinatorial data.

\end{thm}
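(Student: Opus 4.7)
For part (i), the plan is to decompose the spectrum $Sp(V)$ of a hypersurface $V$ with isolated singularities into a global part depending only on $d$ and $n$, plus a sum of local corrections at each singular point depending only on the local analytic type. By Theorem \ref{thm2'}, $\tilde{H}^*(F_V)$ is concentrated in degrees $n-1$ and $n$ with controlled weights, so the spectrum only involves these two cohomology groups. I would then exploit the localization property of vanishing cycles: the class $[\tilde H^*(F_V)] - [\tilde H^*(F_{sm})]$ in the Grothendieck group of graded $\mu_d$-MHS, where $F_{sm}$ is the Milnor fiber of a smooth degree-$d$ hypersurface of $\PP^n$, is supported at the singular points of $V$, and the contribution at each $p_i$ depends only on the analytic type of $(V,p_i)$. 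Applying the spectrum formula \eqref{eq2} yields the general statement. For a line arrangement $\A$, the singularities of $V$ are ordinary multiple points whose local analytic type at a point of multiplicity $k$ is determined by $k$; hence the local contributions depend only on $\{m_k\}$ and, together with $d$, determine $Sp(\A)$.

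For part (ii), it suffices to exhibit a pair of line arrangements $\A_1, \A_2$ with identical weak data $(d, \{m_k\})$ but different $P^{\mu_d}(F)$. Since the equivariant Hodge-Deligne polynomial encodes the Hodge numbers $h^{p,q}(H^j(F,\C)_{\be})$ of each isotypic component, it is enough to find arrangements for which some $\dim H^1(F)_{\be}$ differs at a non-trivial character $\be$. I would appeal to the classical phenomenon that non-triviality of monodromy eigenspaces at $\be\ne 1$ is controlled by resonance conditions on the incidence structure of the multiple points, not merely on their multiplicities. Concretely, a Ceva-type arrangement (whose triple points satisfy special incidences and contribute non-trivial eigenspaces for eigenvalues of order $3$) paired with a realization of the same $(d, \{m_k\})$ without such special incidences provides the required example.

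The main obstacle is part (i): setting up the local-global decomposition of the equivariant Hodge-Deligne polynomial compatibly with both the monodromy action and the Deligne MHS. The cleanest framework is Saito's theory of mixed Hodge modules, via the nearby and vanishing cycle functors applied to a degeneration of $V_{sm}$ to $V$; additivity in the relevant Grothendieck group then yields the decomposition. In contrast, once a specific pair is chosen in part (ii), the computation of the Milnor fiber Hodge numbers proceeds by standard methods such as those developed in \cite{BDS} and \cite{DP}, and the difference in $P^{\mu_d}(F)$ is read off directly.
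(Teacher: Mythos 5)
Your part (i) rests on the claim that the class $[\tilde H^*(F_V)]-[\tilde H^*(F_{sm})]$ in the Grothendieck group of graded $\mu_d$-MHS is a sum of local contributions depending only on the analytic types of the singular points of $V$. This claim is false, and is in fact contradicted by part (ii) of the very theorem you are proving: since $[\tilde H^*(F_{sm})]$ depends only on $d$ and $n$, your localization would make the whole class $[\tilde H^*(F_V)]$ --- hence the equivariant Hodge--Deligne polynomial $P^{\mu_d}(F)$, because for a line arrangement $H^1(F)_{\ne 1}$ is pure of weight $1$ while $H^2(F)_{\ne 1}$ has weights $2$ and $3$, so no cancellation occurs between degrees in the alternating sum --- depend only on the weak combinatorial data. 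What is true, and what makes the theorem delicate, is that only the Hodge filtration localizes: the spectrum involves $\dim \Gr_F^p$ summed over all weights, and it is precisely the weight-graded refinement that fails to be local. The paper's proof makes this visible by passing to the surface $X=X_V\subset\PP^{n+1}$ via Theorem \ref{thm1} and Corollary \ref{cor1}, applying Steenbrink's exact sequence \eqref{eq19} for a $\mu_d$-equivariant smoothing to get Proposition \ref{prop2}, and combining it with Corollary \ref{cor2}; in the specific combination $m_{a}=-h^{1,0}(H^1(F),\be)+h^{1,1}(H^2(F),\be)+h^{1,2}(H^2(F),\be)$ the global, non-combinatorial terms $h^{1,2}(H^3(X),\cdot)$ and $h^{2,1}(H^3(X),\cdot)$ cancel, leaving only $h^{1,1}(\gamma)$ (depending on $d$ alone) and local constants, as in \eqref{eq26}. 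Your mixed-Hodge-module route could in principle be repaired, but only by proving a localization statement for the $F$-filtration alone rather than for the individual $h^{p,q}$; that is essentially the Budur--Saito argument, not a consequence of the lemma you state.

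Part (ii) follows the same strategy as the paper: exhibit arrangements with identical $(d,\{m_k\})$ but different $h^{p,q}(H^1(F),\be)$ for some $\be\ne 1$ (equivalently, by Theorem \ref{thm1}, different $h^{1,2}(H^3(X),\be)$), whence \eqref{eq25} shows that the coefficient of $uv$ in $P^{\mu_d}(F)$ differs. However, your candidate pair is not actually produced: for the Ceva arrangement $A(3,3,3)$ (nine lines, twelve triple points, no double points) the existence of a second realization of the same weak data ``without special incidences'' is far from clear and would need to be verified. The paper instead invokes a known pair with these properties from Theorem 6.4.15 of \cite{D0}.
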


In fact, a weaker invariant, the weight polynomial $W(F)$, as recalled in Remark \ref{rem2} (take $\G=1$), is itself not determined by the weak combinatorial data (same example as in the proof of Theorem \ref{thm3}).

Explicit numerical formulas for the coefficients $m_{a}$ in the spectrum of $\A$ were obtained in \cite{BS}, Theorem 3. On the other hand, our proof gives geometric description of these coefficients in terms of specific $\mu_d$-actions on Milnor fibers and on a degree $d$ smooth surface, see for instance
\eqref{eq26}.

We mention also Corollary \ref{cor1.5}, computing the Hodge-Deligne polynomial of the link (or deleted neighbourhood) of the singular
locus $\Sigma$ of a projective  variety $X$ in terms of the Hodge-Deligne polynomial of the exceptional divisor $D$ of an embedded resolution of the pair $(X, \Sigma)$.

In the Appendix, we use $p$-adic Hodge theory to prove that quite generally, whenever a $\G$-variety $X$ is defined over a number
field, the number of rational points of its reductions modulo prime ideals can be used in certain
cases to compute the Hodge-Deligne polynomial $P_c^\G(X)(u,v)$. We thank Mark Kisin for discussions about 
this subject.

\bigskip

We would also like to thank Morihiko Saito for his help in proving Theorem \ref{thm2}.

\section{Equivariant Hodge-Deligne polynomials} \label{sec2}

Let $\G$ be a finite group and denote by $R(\G)$ the complex representation ring of $\G$.
If $V$ is a (finite dimensional) $\G$-module, we denote by the same symbol $V$ the class of $V$
in $R(\G)$.

When $E$ is a $\G$-module, the dual module $E^{\vee}$ is defined in the usual way, that is
\begin{equation} 
\label{eq5}
(g\cdot h)(v)=h(g^{-1}\cdot v),
\end{equation}
for any $h \in E^{\vee}$, $v \in E$ and $g \in \G$. This gives rise to an involution 
$$\iota: R(\G) \to R(\G), ~~ ~~ E \mapsto E^{\vee} $$ 
of the representation ring  $R(\G)$.

\begin{ex}
\label{ex1}
If $\G=\mu_d$, then as explained in the Introduction, any irreducible $\G$-module is of the form $\C_\alpha$
for some $\alpha\in\mu_d$.
One then has $\iota(\C_{\al})=\C_{\overline {\al}}=\C_{\al^{-1}}.$
Moreover, if a $\G$-module $E$ is defined over $\Q$, then $\dim E_\alpha=\dim E_{\overline {\al}}$
for any $\al \in \mu_d$. It follows that in this case $\iota(E)=E$.
\end{ex}
If $H$ is a $\mu_d$-MHS and $\al \in \mu_d$, we write $h^{p,q}(H,\al)$ for the multiplicity of the irreducible representation $\C_{\al}$ in the representation $H^{p,q}(H)$. That is, $h^{p,q}(H,\al)=\dim H^{p,q}(H)_\alpha$. With this notation, we have
\begin{equation} 
\label{eq5.1}
h^{p,q}(H^{\vee},\al)=h^{-p,-q}(H,\overline \al).
\end{equation} 
 To see this, note that $H^{p,q}(H^{\vee})$ is the dual of $H^{-p,-q}(H)$.
 One has also 
 \begin{equation} 
\label{eq5.2}
 h^{p,q}(H(m),\al)=h^{p+m,q+m}(H,\al) \text{ and } h^{p,q}(H,\al)=
h^{q,p}(H,\overline \al).
\end{equation} 
For the second equality, recall that complex conjugation establishes an $\R$-linear isomorphism
$H^{p,q} \to H^{q,p}$.

\begin{definition}
\label{def1}
The equivariant Hodge-Deligne polynomial of the $\G$-variety $X$ is the polynomial $P^{\G}(X) \in R(\G)[u,v]$ defined as the sum
$$P^{\G}(X)(u,v)=\sum_{p,q}E^{\G;p,q}(X)u^pv^q$$
where $E^{\G;p,q}(X)=\sum_j(-1)^jH^{p,q}(H^j(X,\C))$.
Similarly, the equivariant Hodge-Deligne polynomial with compact supports of the $\G$-variety $X$ is the polynomial $P_c^{\G}(X) \in R(\G)[u,v]$ defined by the sum
$$P_c^{\G}(X)(u,v)=\sum_{p,q}E_c^{\G;p,q}(X)u^pv^q$$
where $E_c^{\G;p,q}(X)=\sum_j(-1)^jH^{p,q}(H_c^j(X,\C))$. 

A similar notation $P^{\G}(H^*)$ will be used when $H^*(X,\Q)$ is replaced by any graded $\G$-MHS
$H^*$.

\end{definition}

\begin{rk}
\label{rem2}
If we set $u=v$ in the above formulas, we get exactly the equivariant weight polynomials
of the $\G$-variety $X$ introduced in \cite{DL}, namely $P^{\G}(X)(u,u)= W^{\G}(X,u)$ and
$P_c^{\G}(X)(u,u)= W_c^{\G}(X,u)$. So the equivariant Hodge-Deligne polynomials are refinements of the
equivariant weight polynomials.

\end{rk}
It follows from the Poincar\'e Duality, see Theorem 6.23, p. 155 in \cite{PS}, that when $X$ is a smooth connected $n$-dimensional variety, one has the following relation
\begin{equation} 
\label{eq7}
\iota(P^{\G}(X)(u,v))=u^nv^nP_c^{\G}(X)(u^{-1},v^{-1})
\end{equation}
where the involution $\iota$ acts on the coefficients of these polynomials.
In fact, the formula (1.6) in \cite{DL} should be modified to read 
\begin{equation} 
\label{eq8}
\iota(W^{\G}(X,u))=u^{2n}W_c^{\G}(X,u^{-1}).
\end{equation}
However, since the weight filtration is defined over $\Q$, in many cases, e.g. when $\G=\mu_d$,
the involution $\iota$ acts trivially on the coefficients of $W^{\G}(X,u)$, recall Example \ref{ex1}
above.

\begin{rk}
\label{rem2.5} Larger, hence more interesting, symmetry groups acting on the Milnor fiber of a hyperplane arrangement may occur
as follows.
Let $G\subseteq\GL(E)$ be a finite group which preserves $\A$, and leaves invariant
a polynomial (not necessarily reduced) $Q$ such that $\cup_{H\in \A}H=Q\inv(0)$; for example, $G$ might 
be a unitary reflection group, which leaves invariant a suitable product of 
linear forms corresponding to $\A$ \cite{LT}.

Write $d$ for the degree of $Q$.
Then $\Gamma:=G\times\mu_d$ acts on $E$: $(g,\xi)\cdot v=\xi\inv gv$ for $v\in E$,
and $\Gamma F\subseteq F$.

Define $\wt{N}:=
\{(v,\zeta)\in N\times\C^\times\mid Q(v)=\zeta^d\}=\{(v,\zeta)\mid \zeta\inv v\in F\}.$
Then $\C^\times$ acts on $\wt{ N}$ diagonally: $\alpha:(v,\zeta)\mapsto (\alpha v,\alpha\zeta)$, and
we have the following commutative diagram

$$
\begin{CD}
\wt{ N} @>p_1>> N\\
@V\pi_1VV @ VV\pi V\\
F @>p >> M\\
\end{CD}
$$
where $p,p_1$ are unramified $\mu_d$-coverings, and the vertical arrows are
`division by $\C^\times$'. In fact, it is well known that $N=M\times \C ^\times$, see for instance
\cite{D2}, p. 209, Proposition 6.4.1. It follows that $\wt{ N}=F \times \C ^\times$.

\end{rk}

\begin{rk}
\label{rem2.6}
If the algebraic variety $X$ is defined over an algebraic number field, it may be reduced
modulo various primes, and properties of $P^\G(X)$ may be deduced by considering rational
points of twisted Frobenius maps in such reductions. Although we do not explore this theme
extensively in this work, we provide some basic background results in the Appendix.
In particular, we give a proof of the following result for a $\G$-variety defined over a 
number field.
\begin{thm}(See Theorem \ref{thm:katzev} below)

\noindent Suppose there are polynomials $P_X(t;w)=\sum_{i=0}^{2\dim X}a_{2i}(w)t^i\in \C[t]$ 
such that for almost all $q$, and all $w\in \Gamma$, we have
$|X(\overline\Fq)^{w\Frob_q}|=P_X(q;w)$. Then $E^{\G;{d,e}}_c=0$ if $d\neq e$, 
and $P_c^\Gamma(X)(x,y)=W_c^\Gamma(X)(\sqrt{xy})$. Moreover $P_X(t^2;w)=W_c^\Gamma(X)(t;w)$.
\end{thm}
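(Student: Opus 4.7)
The plan is to translate the counting hypothesis into a statement about traces on $\ell$-adic cohomology via the Grothendieck--Lefschetz fixed point formula, then combine Deligne's purity theorem (Weil~II) with an equivariant $p$-adic Hodge-theoretic argument of Katz type to force the weight-graded pieces of the Deligne MHS to be of pure Hodge--Tate type $(j,j)$.

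First I would spread $X$ out to a separated scheme of finite type over $\OO_K[1/N]$ for some number field $K$ of definition and integer $N$, with $\G$ acting algebraically, so that for each prime $\fp$ of good reduction of residue characteristic $p$ and each prime $\ell\neq p$ one can apply the Grothendieck--Lefschetz trace formula for twisted Frobenius: for every $w\in\G$,
$$|X(\overline{\Fq})^{w\Frob_q}| = \sum_{i=0}^{2\dim X}(-1)^i\Tr\bigl(w\Frob_q\mid H^i_c(X_{\overline{\Fq}},\Q_\ell)\bigr).$$
The $\ell$-adic comparison identifies each $H^i_c(X_{\overline{\Fq}},\Q_\ell)$ with a $\Q_\ell$-form of the Betti cohomology $H^i_c(X(\C),\Q)$, carrying the same $\G$-module and weight-filtration structure.

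Next I would exploit the hypothesis $|X(\overline{\Fq})^{w\Frob_q}|=\sum_j a_{2j}(w)q^j$, valid for almost all $q$ and all $w$, together with Deligne's theorem that the eigenvalues of $\Frob_q$ on $Gr^W_k H^i_c$ are $q$-Weil numbers of weight $k$. Equating the two expressions for infinitely many $q$ and applying the argument to each irreducible character of $\G$ separately, a Vandermonde-type separation on the distinct archimedean absolute values of Weil numbers forces $Gr^W_k H^i_c=0$ for odd $k$, and $\Frob_q$ acting as the scalar $q^j$ on every $Gr^W_{2j}H^i_c$. Consequently
$$\Tr\bigl(w\Frob_q\mid Gr^W_{2j}H^i_c\bigr) = q^j\,\Tr\bigl(w\mid Gr^W_{2j}H^i_c\bigr),$$
and matching coefficients of $q^j$ gives $a_{2j}(w)=\sum_i(-1)^i\Tr(w\mid Gr^W_{2j}H^i_c)$, which is exactly the identity $P_X(t^2;w)=W_c^\G(X)(t;w)$.

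The hard part is promoting this weight-and-Frobenius information to the Hodge--Tate conclusion that $H^{p,q}(Gr^W_{2j}H^i_c)=0$ unless $p=q=j$, i.e.\ that $E^{\G;d,e}_c=0$ for $d\neq e$. This does not follow from $\ell$-adic purity alone, and is the crux of the argument. The standard route, which is the equivariant analogue of the theorem in the appendix of \cite{HRV}, is to compare $\ell$-adic and $p$-adic Galois actions via Fontaine's $B_{dR}$ (or $B_{cris}$ in the smooth proper case, combined with de Jong alterations and cohomological descent to handle the general mixed case): a crystalline Frobenius eigenvalue equal to $q^j$ forces a single Hodge--Tate weight $j$, and hence the corresponding piece of the Deligne MHS to be Tate of type $(j,j)$. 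Equivariance is preserved because all the constructions are functorial in $\G$.

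Once this is in hand, the conclusion is formal: since $H^{p,q}(H^i_c)=0$ unless $p=q$, the equivariant Hodge--Deligne polynomial lies in $R(\G)[xy]$, and the specialisation relation $W_c^\G(X)(u)=P_c^\G(X)(u,u)$ of Remark \ref{rem2} immediately yields $P_c^\G(X)(x,y)=W_c^\G(X)(\sqrt{xy})$, as asserted.
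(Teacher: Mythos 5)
Your overall strategy is the paper's: the Grothendieck--Lefschetz formula for the twisted Frobenius, separation of weights using purity, and then $p$-adic Hodge theory (the $B_{dR}$ comparison and Tate's computation of $\C_p(k)^{G_{F_\fp}}$) to convert Frobenius data into the Hodge--Tate conclusion $E^{\G;d,e}_c=0$ for $d\neq e$. You identify the correct crux and essentially the tools the paper imports from \cite{KL2}.

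There is, however, a genuine gap in your middle step. The hypothesis $|X(\overline\Fq)^{w\Frob_q}|=\sum_j a_{2j}(w)q^j$ only controls the alternating sum $\sum_i(-1)^i\Tr(w\Frob_q\mid H^i_c)$, so weight separation yields an identity of \emph{virtual} modules in the Grothendieck ring $R(\G\times G_{F,S})$ --- this is exactly \eqref{eq:evhstar} --- and not the statements you assert, namely that each individual $\Gr^\bW_kH^i_c$ vanishes for $k$ odd and that $\Frob_q$ acts as the scalar $q^{j}$ on each $\Gr^\bW_{2j}H^i_c$. Cancellation between even and odd cohomological degrees is not excluded by the hypothesis, and accordingly the theorem's conclusion concerns only the Euler characteristics $E^{\G;d,e}_c$, not the purity of each $H^i_c$. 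This overreach hides a second, necessary step: once you only have a virtual identity, you must explain why the Hodge numbers, which by \eqref{eq:fil3} and \eqref{eq:fil4} are dimensions of $G_{F_\fp}$-invariants of $\Gr_m^\bW H^j_c(X,\Q_p)\otimes_{\Q_p}\C_p(d)$, depend only on the class in the Grothendieck ring, i.e.\ on the semisimplification. The paper handles this by observing from \eqref{eq:fil2} that $V\otimes_{\Q_p}\C_p(d)$ is already semisimple, so that $V$ may be replaced by $V_{ss}$; your sketch omits this point. With the virtual identity and the semisimplification argument in place, the remainder of your outline (the invariants of $\Q_p(-\frac{m}{2})\otimes\C_p(d)$ vanish unless $m=2d$, and coefficient extraction gives $P_X(t^2;w)=W^\G_c(X)(t;w)$) goes through exactly as in the paper; the detour through $B_{cris}$, alterations and cohomological descent is unnecessary, since \cite{KL2} already supplies the de Rham property of all subquotients of $H^j_c(X,\Q_p)$.
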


Here we write, for any polynomial $Q(x,y)=\sum_{i,j}Q_{i,j}x^iy^j\in R(\G)[x,y]$ and $w\in \G$, 
$Q(x,y;w):=\sum_{i,j}\Tr(w,Q_{i,j})x^iy^j$.

\end{rk}


\section{Localization at the singular locus} \label{sec3}

Let $X$ be an $n$-dimensional projective algebraic variety, $n \geq 2$, with singular locus $\Sigma$. Let $X^*=X \setminus \Sigma$ be the regular part of $X$.

Assume that a finite group $\G$ acts algebraically on  $X$; then
 $\Sigma$ is $\G$-invariant, i.e. for $g \in \G$ and $a \in \Sigma$ one has $g \cdot a \in \Sigma$.
 
As in \cite{D1}, we consider the following exact sequence of $\G$-MHS, see for instance \cite{PS}, p. 136.
\begin{equation} 
\label{eq9}
... \to H^k_{\Sigma}(X) \to H^k(X) \to H^k(X^*) \to H^{k+1}_{\Sigma}(X) \to ...
\end{equation}
For the equivariant Hodge-Deligne polynomials, this yields (with obvious notation)
\begin{equation} 
\label{eq10}
P^{\G}(X)=P^{\G}(X^*)+P^{\G}(H^*_{\Sigma}(X)).
\end{equation}
On the other hand, let $T$ be an $\G$-stable algebraic neighbourhood of $\Sigma$ in $X$, and $T^*=T \setminus \Sigma$
the corresponding deleted neighbourhood of $\Sigma$ in $X$, which is homotopically equivalent to the link $K(\Sigma)$ of
$\Sigma$ in $X$. Then Remark 6.17 in \cite[p.151]{PS},  yields an exact sequence of $\G$-MHS
\begin{equation} 
\label{eq9.1}
... \to H^k_{\Sigma}(X) \to H^k(\Sigma  ) \to H^k(T^*) \to H^{k+1}_{\Sigma}(X) \to ...
\end{equation}
which, at the level of equivariant Hodge-Deligne polynomials, gives
\begin{equation} 
\label{eq10.1}
P^{\G}(H^*_{\Sigma}(X))=P^{\G}(\Sigma)-P^{\G}(T^*).
\end{equation}

Further, by the additivity of the equivariant Hodge-Deligne polynomials with compact supports, see for instance \cite{DL}, one has 
$$P_c^{\G}(X^*)=P^{\G}(X)-P^{\G}(\Sigma)$$
Finally, the Poincar\'e Duality formula \eqref{eq7} for smooth varieties implies that
$$P^{\G}(X^*)(u,v)=u^nv^n\iota(P_c^{\G}(X^*))(u^{-1},v^{-1}).$$
Assembling the above relations yields a proof of the following result, which
is an equivariant form of Proposition 1.7 in \cite{D1}.

\begin{prop}
\label{prop1}

$$P^{\G}(X)(u,v)-u^nv^n\iota(P^{\G}(X))(u^{-1},v^{-1})=$$ 
$$P^{\G}(\Sigma)(u,v)-u^nv^n\iota(P^{\G}(\Sigma))(u^{-1},v^{-1})-P^{\G}(T^*)(u,v) .$$

\end{prop}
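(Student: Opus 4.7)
The plan is to assemble the four equivariant identities displayed immediately before the statement; the proposition is a purely formal consequence. Concretely, my building blocks are: (a) the long exact sequence \eqref{eq9} of $\G$-MHS, which via Euler characteristic in $R(\G)[u,v]$ gives $P^{\G}(X) = P^{\G}(X^*) + P^{\G}(H^*_\Sigma(X))$; (b) the local-to-link sequence \eqref{eq9.1} applied to a $\G$-stable neighbourhood $T$ of $\Sigma$, which gives $P^{\G}(H^*_\Sigma(X)) = P^{\G}(\Sigma) - P^{\G}(T^*)$; (c) additivity of the compactly-supported equivariant Hodge-Deligne polynomial for the $\G$-stratification $X = \Sigma \sqcup X^*$, yielding $P_c^{\G}(X^*) = P^{\G}(X) - P^{\G}(\Sigma)$ (here we use that $X$ and $\Sigma$ are projective, so $P^\G = P_c^\G$ on them); and (d) equivariant Poincar\'e duality \eqref{eq7} on the smooth $n$-dimensional open subvariety $X^*$.

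First I would combine (a) and (b) to eliminate the local cohomology term, getting
$$P^{\G}(X^*) = P^{\G}(X) - P^{\G}(\Sigma) + P^{\G}(T^*).$$
Then I would feed this, together with (c), into the Poincar\'e duality identity (d):
$$P^{\G}(X^*)(u,v) = u^n v^n\, \iota\bigl(P_c^{\G}(X^*)\bigr)(u^{-1},v^{-1}) = u^n v^n\, \iota\bigl(P^{\G}(X) - P^{\G}(\Sigma)\bigr)(u^{-1},v^{-1}).$$
Substituting the first expression for $P^{\G}(X^*)$ into the left-hand side and transposing the $\Sigma$- and $T^*$-terms produces exactly the displayed formula, using only that $\iota$ is a $\Z$-linear involution on $R(\G)$ and therefore distributes through the difference of polynomials.

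The proof has no serious mathematical obstacle; what needs care is organizational. One must verify that the two MHS exact sequences \eqref{eq9} and \eqref{eq9.1} are $\G$-equivariant (guaranteed by functoriality of Deligne's MHS cited to \cite{PS}), so that the alternating sums defining $P^{\G}$ behave additively in $R(\G)$; and one must check that Poincar\'e duality \eqref{eq7} respects the $\G$-action on $H^*(X^*)$ and $H_c^*(X^*)$, so that the involution $\iota$ appears on the correct side. Once these functorialities are in place, the proof is the one-line rearrangement described above.
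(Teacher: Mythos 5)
Your proof is correct and is essentially identical to the paper's own argument, which assembles the same four identities \eqref{eq10}, \eqref{eq10.1}, the additivity relation $P_c^{\G}(X^*)=P^{\G}(X)-P^{\G}(\Sigma)$, and Poincar\'e duality \eqref{eq7} in the same way. The only point worth noting is that \eqref{eq7} is stated with $\iota$ on the left-hand side, so one applies the involution to both sides before substituting, exactly as you do.
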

Now suppose we have a $\G$-equivariant log resolution $\pi:(\tilde X,D) \to (X,\Sigma)$, see \cite{AW}. Then the deleted neighbourhoods $T^*$ of $\Sigma$ in $X$ and $T_1^*$ of $D$ in $\tilde X$ clearly coincide.
Applying Proposition \ref{prop1} to $(\tilde X,D)$ and using \eqref{eq7}, we obtain the following generalization of the
known relation between the resolution and MHS on the link for an isolated surface singularity, see \cite{D0}.

\begin{cor}
\label{cor1.5} With the above notation, one has
$$P^{\G}(T^*)(u,v)= P^{\G}(D)(u,v)-u^nv^n\iota(P^{\G}(D))(u^{-1},v^{-1}).$$
\end{cor}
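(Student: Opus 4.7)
The plan is to apply Proposition \ref{prop1} verbatim, but with the pair $(X,\Sigma)$ replaced by the pair $(\tilde X, D)$ coming from the log resolution. To justify this substitution, I would first check that every ingredient entering the derivation of Proposition \ref{prop1} still makes sense in the new setting: the local-cohomology long exact sequence \eqref{eq9}, the link exact sequence \eqref{eq9.1}, additivity of $P_c^\G$, and Poincaré duality \eqref{eq7} for the smooth part. The first two require only that $D$ be a closed $\G$-invariant subvariety of $\tilde X$, which holds since $\pi$ is $\G$-equivariant. Additivity gives $P_c^\G(\tilde X \setminus D) = P^\G(\tilde X) - P^\G(D)$ because $\tilde X$ and $D$ are projective. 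Poincaré duality applies to $\tilde X \setminus D$ because $\tilde X$ is smooth (and one may, if needed, apply the duality componentwise).

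Next I would use the crucial observation already recorded in the text: the deleted neighbourhoods $T^*$ of $\Sigma$ in $X$ and $T_1^*$ of $D$ in $\tilde X$ coincide, since $\pi$ restricts to an isomorphism $\tilde X \setminus D \xrightarrow{\sim} X \setminus \Sigma$ which extends across the boundary of a small neighbourhood. Thus the $P^\G(T^*)$ appearing in the analogue of Proposition \ref{prop1} for $(\tilde X, D)$ is the same object that appears in the desired identity.

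The final step is to collapse the left-hand side. Since $\tilde X$ is smooth and projective, $P^\G(\tilde X) = P_c^\G(\tilde X)$, and Poincaré duality \eqref{eq7} then gives
\[
u^n v^n \iota\bigl(P^\G(\tilde X)\bigr)(u^{-1},v^{-1}) \;=\; P^\G(\tilde X)(u,v).
\]
Substituting $(\tilde X, D)$ into the identity of Proposition \ref{prop1}, the left-hand side becomes
$P^\G(\tilde X)(u,v) - u^n v^n \iota(P^\G(\tilde X))(u^{-1},v^{-1}) = 0$,
and rearranging the right-hand side yields exactly
\[
P^\G(T^*)(u,v) \;=\; P^\G(D)(u,v) - u^n v^n \iota\bigl(P^\G(D)\bigr)(u^{-1},v^{-1}),
\]
as claimed.

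The only delicate point is verifying that Proposition \ref{prop1} really was derived without secretly using that $\Sigma$ is the singular locus of $X$; a quick inspection of its proof shows the derivation only needs that $X \setminus \Sigma$ is smooth, which holds for $\tilde X \setminus D$ as well. So I expect no genuine obstacle, just the bookkeeping needed to confirm that the substitution is legitimate.
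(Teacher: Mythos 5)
Your argument is correct and coincides with the paper's own proof: apply Proposition \ref{prop1} to the pair $(\tilde X, D)$, note that $T^*$ and $T_1^*$ agree, and use the self-duality of $P^{\G}(\tilde X)$ coming from \eqref{eq7} for the smooth proper variety $\tilde X$ to kill the left-hand side. Your extra check that the derivation of Proposition \ref{prop1} only uses smoothness of the open complement is a sensible precaution, but the route is the same.
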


For the remainder of this section we confine attention to the following particular situation.
Let $X$ be a hypersurface in
$\PP^{n+1}$, with $n \geq 2$, having only isolated singularities 
$$\Sigma=\{a_1,..., a_m\}.$$
Assume that the finite group $\G$ acts algebraically on $\PP^{n+1}$ in such a way that the hypersurface $X$
is $\G$-stable, i.e. for $g \in \G$ one has $g \cdot X \subset X$, and $\Sigma$ is point-wise invariant, i.e. for $g \in \G$ and $a_j \in \Sigma$ one has $g \cdot a_j=a_j$.

Now the link $K_s$ of each singular point $a_s$ may be chosen to be $\G$-invariant,
and hence the cohomology groups $H^*(K_s)$ acquire a $\G$-mixed Hodge structure.
Moreover, for any $k$, one has the following isomorphism of $\G$-MHS:
\begin{equation} 
\label{eq11}
H^k(T^*,\Q)=\oplus_s H^{k}(K_s,\Q).
\end{equation}
 
Let us look at the polynomial $P^{\G}(X)$ in more detail. To do this, let $i:X \to \PP^{n+1}$ be the inclusion and define the primitive cohomology of $X$ to be
$$H_0^*(X,\Q)=\coker\{i^*:H^*(\PP^{n+1},\Q) \to H^*(X,\Q)\}.$$
This is clearly a graded $\G$-MHS and one has
\begin{equation} 
\label{eq12}
P^{\G}(X)=P^{\G}(H_0^*(X,\Q))+P_n
\end{equation}
where $P_n(u,v)=1+uv+....+u^nv^n.$ To see this, notice that $\G$ acts trivially on $H^*(\PP^{n+1})$
and recall the known facts on the cohomology of hypersurfaces with isolated singularities, see \cite{D0}.

In particular, it is known that $H_0^j(X,\Q)=0$ except for $j=n$ (here the weights are $\leq n$ since
$X$ is proper) and for $j=n+1$, when $H_0^{n+1}(X,\Q)$ is pure of weight $n+1$ by Steenbrink's results,
see \cite{St}.

\bigskip

Henceforth we assume in addition that each of the isolated singularities $(X,a_s)$ is weighted homogeneous. Then the possible weights on $H_0^{n}(X,\Q)$ are just $n-1$ and $n$, see Example (B), formula (i), p. 381 in \cite{D1}.

Moreover, the only nonzero cohomology groups of a link $K_s$ are the following: $H^0(K_s)$  (1-dimensional, of type (0,0)), $H^{n-1}(K_s)$ (of pure weight $n-1$), $H^n(K_s)$ (of pure weight $n+1$),
and $H^{2n-1}(K_s)$ (1-dimensional, of type $(n,n)$).

As a consequence of this discussion, we see that knowledge of the Hodge-Deligne polynomials of the links $K_s$ and of the primitive cohomology group $H_0^{n+1}(X,\Q)$ determine the terms of weight $n-1$
in the Hodge-Deligne polynomial of $X$. More precisely, we have the following result extending Corollary 1.8 in \cite{D1}.

\begin{cor}
\label{cor2}
For $p+q=n-1$, one has the following equality in $R(\G)$
$$H^{p,q}(H_0^n(X))=\sum_s H^{p+1,q+1}(H^{n}(F_s))-H^{n-p,n-q}(\iota(H_0^{n+1}(X))).$$
 
\end{cor}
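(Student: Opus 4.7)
The plan is to apply Proposition \ref{prop1} to the pair $(X,\Sigma)$ and then extract the coefficient of $u^{p}v^{q}$ with $p+q=n-1$ from the resulting identity. First I would use \eqref{eq12} to write $P^{\G}(X)=P_n+P^{\G}(H_0^*(X))$; since $P_n$ has integer coefficients (so is $\iota$-invariant) and satisfies $u^nv^n P_n(u^{-1},v^{-1})=P_n$, it cancels in $P^{\G}(X)-u^nv^n\iota(P^{\G}(X))(u^{-1},v^{-1})$. Because $\Sigma$ is $m$ pointwise-fixed points, $P^{\G}(\Sigma)-u^nv^n\iota(P^{\G}(\Sigma))(u^{-1},v^{-1})=m(1-u^nv^n)$, whose coefficient at $u^pv^q$ with $p+q=n-1$ vanishes for $n\ge 2$. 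By \eqref{eq11} the remaining right-hand side term reduces to $P^{\G}(T^*)=\sum_s P^{\G}(K_s)$.

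Next I would read off the surviving coefficients at weight $n-1$. Using the link structure recalled above, only $H^{n-1}(K_s)$ (pure weight $n-1$) contributes, yielding $(-1)^{n-1}\sum_s H^{p,q}(H^{n-1}(K_s))$ from the right hand side. On the left, $H_0^n(X)$ (weights $n-1$ and $n$) contributes $(-1)^n H^{p,q}(H_0^n(X))$ directly, while $H_0^{n+1}(X)$ (pure of weight $n+1$) contributes only through the $u^nv^n\iota(\cdot)(u^{-1},v^{-1})$ term, giving $(-1)^n H^{n-p,n-q}(\iota(H_0^{n+1}(X)))$ since $(n-p)+(n-q)=n+1$ matches its weight. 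Dividing the resulting identity by $(-1)^n$ produces
\[
H^{p,q}(H_0^n(X))+H^{n-p,n-q}(\iota(H_0^{n+1}(X)))=\sum_s H^{p,q}(H^{n-1}(K_s)).
\]

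To conclude I would translate the link contribution into Milnor-fiber form via the standard $\G$-equivariant isomorphism of MHS $H^{n-1}(K_s)\cong \Gr^W_{n+1}H^n(F_s)(1)$, valid for weighted homogeneous isolated hypersurface singularities; applying \eqref{eq5.2} this becomes $H^{p,q}(H^{n-1}(K_s))=H^{p+1,q+1}(H^n(F_s))$ for $p+q=n-1$, and substituting into the displayed equation yields the claim. The main obstacle is this last identification; it is the Steenbrink-type description of the link MHS for weighted homogeneous isolated singularities (combining Poincar\'e duality on the $(2n-1)$-manifold $K_s$ with Steenbrink's self-duality of $H^n(F_s)$ exchanging $\Gr^W_{n-1}$ and $\Gr^W_{n+1}$), while its $\G$-equivariance is immediate from the naturality of the constructions under the action of $\G$.
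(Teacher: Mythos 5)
Your proposal is correct and follows essentially the same route as the paper: Proposition \ref{prop1} combined with \eqref{eq12}, the triviality of the $\Sigma$-terms in weight $n-1$, the weight structure of the link cohomology via \eqref{eq11}, and the identification $H^{p,q}(H^{n-1}(K_s))=H^{p+1,q+1}(H^{n}(F_s))$ for weighted homogeneous singularities. The sign bookkeeping and the final substitution match the paper's argument.
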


Here $F_s$ is the Milnor fibre corresponding to $K_s$, and to obtain the formula,
we use the isomorphism of $\mu_d$-representations
$$H^{p,q}(H^{n-1}(K_s))=H^{p+1,q+1}(H^n(F_s))$$
valid for $p+q=n-1$ and any weighted homogeneous hypersurface singularity $(X,a_s)$ of dimension $n$.

The Hodge-Deligne polynomials of the Milnor fibers $F_s$ are local invariants easy to compute in general 
(since one has explicit bases for these cohomology groups in terms of algebraic differential forms), while the the Hodge-Deligne polynomial of $H_0^{n+1}(X,\Q)$
may be computed in many cases using the results in (6.3.15), p. 202 in \cite{D0}, see for instance Theorem 6.4.15 in \cite{D0}.

\section{Monodromy of Milnor fibers of line arrangements} \label{sec4}

Recall the following notation from the Introduction: we consider a degree $d$ reduced hypersurface $V$ in $\PP^{n}$, $n\geq 2$,
given by $Q_V=0$ and the associated Milnor fiber $F_V$ defined in $\C^{n+1}$ by $Q_V(x)=1$.
We further consider $X_V$, the projective closure of $F_V$ in $\PP^{n+1}$, given by $Q_V(x)-t^d=0$.
When $V$ is the union of the hyperplanes  in $\A$, we may drop the subscript $V$ from $F_V$ and $X_V$.

We consider the (monodromy) $\mu_d$-action on $F_V$ given by 
\begin{equation} 
\label{eq13}
\al \cdot (x_0,...,x_n)=(\al x_0, ..., \al x_n)
\end{equation}
for any $\al \in \mu_d$ and $(x_0,...,x_n) \in F_V$, and the related $\mu_d$-action on $\PP^{n+1}$ given by 
\begin{equation} 
\label{eq14}
\al \cdot (x_0:...:x_n:t)=(x_0:...:x_n: \al ^{-1} t)
\end{equation}
for any $\al \in \mu_d$ and $(x_0:...:x_n:t) \in \PP^{n+1}$. Then the obvious isomorphism
$F_V \to X_V \setminus V$ given by $(x_0,...,x_n) \mapsto (x_0:...:x_n:1)$ is $\mu_d$-equivariant,
in particular
\begin{equation} 
\label{eq15}
P_c^{\mu_d}(F)=P^{\mu_d}(X)-P^{\mu_d}(V).
\end{equation}
The last term $P^{\mu_d}(V)$ is easy to compute (and depends only on the combinatorics in the case of hyperplane arrangements), 
since the $\mu_d$-action on $V$ is trivial. In this way we arrive at the situation studied 
in the previous section.

\subsection{Proof of Theorem \ref{thm1}} \label{subsec1}

\proof This proof is very simple and quite general; it does not require the results obtained above.
We have the following exact sequence of $\G$-MHS.
\begin{equation} 
\label{eq16}
... \to H^{i-1}_0(V) \to H^i_c(F_V) \to H^i_0(X_V) \to H^i_0(V) \to ...
\end{equation}
Recall that $H^i_c(F_V)$ is dual to $H^{2n-j}(F_V)$, and hence $\dim H^i_c(F_V)_1=\dim H^{2n-i}(F_V)_1=b_{2n-i}(M)$. On the other hand, Alexander Duality implies that
$$\dim H^{2n-i}(M)=\dim H^i(\PP^n,V)=\dim H^{i-1}_0(V).$$
Moreover, since $X_V/\mu_d=\PP^n$, it follows that $H_0^*(X)_1$, the fixed part under $\G=\mu_d$, is trivial. As a result we get the following identification of $\mu_d$-MHS
\begin{equation} 
\label{eq17}
 H_c^i(F)_{\ne 1} =H_0^i(X).
\end{equation}
This identification yields  Theorem \ref{thm1} by Poincar\'e Duality.
\endproof

\subsection{Proof of Theorem \ref{thm2}} \label{subsec2}

 \proof This proof requires a number of results due to Budur and Saito in \cite{BS}, \cite{Sa1}, \cite{Sa2}.
 First, notice that by taking a generic hyperplane section and applying the affine version of 
 the Lefschetz Theorem, see for instance \cite{D0}, p. 25, it is enough to prove Theorem \ref{thm2} for
$j=n$. To proceed, we need the following result, see Lemma (3.6) in \cite{Sa1}.

\begin{prop}
\label{propLEMMA}
If $Gr^W_{2n}H^n(F,\C)_{\al}\ne 0$, then $N^n\ne 0$ on $\psi_{Q,\al}\C_Z$, where $Z=\C^{n+1}$ and $N$
is the logarithm of the unipotent part of the monodromy.
\end{prop}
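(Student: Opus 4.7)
The plan is to deduce this from M.~Saito's theory of mixed Hodge modules \cite{Sa1,Sa2}, by translating the statement about the Deligne weight filtration on the Milnor fiber cohomology into a statement about the monodromy weight filtration on the Hodge-theoretic nearby cycles.

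First I would reformulate the Milnor fiber cohomology via nearby cycles. Since $Q$ is homogeneous of degree $d$, the global Milnor fiber $F=Q^{-1}(1)$ is homotopy equivalent to the local Milnor fiber at the origin, so
\[ H^j(F,\C)_\al \;=\; H^j\bigl((\psi_{Q,\al}\C_Z)_0\bigr), \]
where $\psi_{Q,\al}$ denotes the $\al$-isotypic part under the semisimple part of the monodromy. The operator $N$ in the statement is the logarithm of the unipotent monodromy acting on the complex $\psi_{Q,\al}\C_Z$ in this identification. Thus the task is to show that a nonzero $\mathrm{Gr}^W_{2n}$ on the left forces $N^n$ to act nontrivially on the sheaf-level complex on the right.

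Next I would invoke Saito's structure theorem. The shifted constant sheaf $\C_Z[n+1]$ is a pure Hodge module of weight $n+1$ on $Z=\C^{n+1}$, and $\psi_Q^H\C_Z[n]$ is a mixed Hodge module on $Q^{-1}(0)$ whose weight filtration $W_\bullet$ is, on each eigenspace $\psi_{Q,\al}^H\C_Z[n]$ with $\al\ne 1$, the monodromy weight filtration $M(N)$ suitably centered so that the top weight appearing on $H^n(F,\C)_\al$ is $2n$. By the functoriality of MHM, the weight filtration induced by $W_\bullet$ on the stalk hypercohomology is exactly the Deligne MHS on $H^n(F,\C)_\al$.

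The core mechanism is then the defining property of the monodromy weight filtration (Jacobson--Morozov): $N W_k\subseteq W_{k-2}$, and $N^k$ induces an isomorphism $\mathrm{Gr}^W_{c+k}\xrightarrow{\sim}\mathrm{Gr}^W_{c-k}(-k)$ around the center $c$. Applied to $\psi_{Q,\al}^H\C_Z[n]$ with $c=n$ and $k=n$, this gives
\[ N^n:\,\mathrm{Gr}^W_{2n}\psi_{Q,\al}^H\C_Z[n]\;\xrightarrow{\sim}\;\mathrm{Gr}^W_{0}\psi_{Q,\al}^H\C_Z[n](-n). \]
If $\mathrm{Gr}^W_{2n}H^n(F,\C)_\al\ne 0$, then the pure Hodge module $\mathrm{Gr}^W_{2n}\psi_{Q,\al}^H\C_Z[n]$ has nonzero stalk cohomology at the origin and is therefore nonzero as a Hodge module; the isomorphism above then forces $N^n\ne 0$ on $\psi_{Q,\al}^H\C_Z[n]$, hence on $\psi_{Q,\al}\C_Z$.

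The main obstacle I would expect lies in Step 2: making the shift and centering conventions precise, and justifying rigorously that the weight filtration on $H^n((\psi_{Q,\al}\C_Z)_0)$ induced from the Saito weight filtration genuinely coincides with the Deligne MHS, i.e.\ that the passage from MHM weights to stalk-cohomology weights does not destroy the top-weight piece. This requires the strictness of morphisms of mixed Hodge modules with respect to $W$, which is one of the deep structural theorems of \cite{Sa1}. Once strictness is in hand, the passage from a nonzero $\mathrm{Gr}^W_{2n}$ on cohomology back to a nonzero $\mathrm{Gr}^W_{2n}$ on the MHM, and then to $N^n\ne 0$ via the Jacobson--Morozov isomorphism, is essentially formal.
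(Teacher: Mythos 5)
The paper does not actually prove this proposition: it is quoted verbatim from M.~Saito's Lemma~(3.6) in \cite{Sa1}, so there is no in-paper argument to match yours against. Your reconstruction via mixed Hodge modules has the right architecture, and it is essentially the mechanism behind Saito's lemma: the weight filtration of $\psi_{Q,\al}^H\C_Z$ (for $\al\ne 1$, applied to the pure module $\C_Z^H[n+1]$ of weight $n+1$) is the monodromy filtration of $N$ centered at $n$; the isomorphisms $N^k:\Gr^W_{n+k}\xrightarrow{\sim}\Gr^W_{n-k}(-k)$ then say that $\Gr^W_{2n}\ne 0$ forces $N^n\ne 0$; and the passage from a nonzero $\Gr^W_{2n}$ on the stalk cohomology at $0$ back to a nonzero $\Gr^W_{2n}$ of the module is correctly handled by the weight estimate for $i_0^*$ (in the relevant cohomological degree $H^ji_0^*$ of a module of weights $\le w$ has weights $\le w+j$, so $\Gr^W_{2n}$ of the stalk in degree $0$ injects into the stalk of $\Gr^W_{2n}$ of the module). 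One small slip: the filtration is centered at $n$ full stop; saying it is ``centered so that the top weight on $H^n(F,\C)_\al$ is $2n$'' is circular, since the top weight is $n+s$ with $s=\max\{k:N^k\ne 0\}$, and equals $2n$ precisely when the conclusion you are trying to prove holds.

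The genuine gap is the one you flag but then misdiagnose. The statement is about the \emph{Deligne} weight filtration on $H^n(F,\C)$, $F$ being the smooth affine variety $Q^{-1}(1)$, whereas your argument computes the weight filtration that $i_0^*\psi_Q^H\C_Z$ induces on the cohomology of the \emph{local} Milnor fibre. That these two mixed Hodge structures coincide is a real theorem, not a formal consequence of strictness of morphisms of mixed Hodge modules: strictness gives you compatibility of filtrations along a morphism you already have, but here one must first construct the comparison isomorphism and show it respects both structures, and the statement genuinely uses the homogeneity of $Q$ (via the $\C^\times$-action identifying the local and global Milnor fibres and trivializing the family $Q^{-1}(t)$ over $\C^\times$); for a non-homogeneous $f$ the local Milnor fibre is not even an algebraic variety and the comparison is meaningless. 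This comparison is precisely where the content of \cite[Lemma~(3.6)]{Sa1} and the related discussion in \cite{BS} lies, so as written your proof establishes the implication for the nearby-cycle MHS but leaves open the step that connects it to the weight filtration actually appearing in the statement.
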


For the general properties of the nearby cycles $\psi_{Q,\al}\C_Z$ we refer to \cite{D2}, and for the weight filtration on them to \cite{Sa0}.
Let $D \subset Z$ be the affine cone over $V$, i.e. $D=Q^{-1}(0)$. There is a canonical way to construct an embedded resolution of $D$ in $Z$, see section (2.1) in \cite{BS} for a projective version
and section (3.5) in \cite{Sa2} for a special affine case.

Let $Z_0=Z$ and denote by $p_0:Z_1 \to Z_0$ the blow-up of the origin in $Z_0=\C^{n+1}$. Then for $1\leq i \leq n-1$, let $p_i:Z_{i+1} \to Z_i$ be the blow-up with center $C_i$, the disjoint union
of the proper transforms in $Z_i$ of the linear spaces (edges) $V \in L(\A)$ (regarded as subspaces in Z) with $\dim V=i$.
Let $\tilde Z=Z_n$, $\tilde p: \tilde Z \to Z$ the composition of the $p_i$'s and $\tilde D=\tilde p^{-1}(D)$. Then $\tilde D$ is a normal crossing divisor in $\tilde Z$, with irreducible components
parametrized by all the edges $V \in L(\A)$ with $\dim V \leq n-1$. Let $\tilde D_V$ denote the irreducible component of $\tilde D$ corresponding to the edge $V$.
We need the following result, see
Proposition (2.3) in \cite{BS} (our situation is slightly different, but the same proof applies).

\begin{prop}
\label{propPROP}
The intersection of a family of irreducible components $(\tilde D_{V_k})_{k=1,r}$ is empty, unless
$V_1 \subset V_2 \subset ...\subset V_r$ up to a permutation. In particular, the multiplicity of  $\tilde D$
at any point $y \in \tilde D$ is bounded by $n$.

\end{prop}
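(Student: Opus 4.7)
The first assertion reduces to the following binary claim: for any two incomparable edges $V, W \in L(\A)$ (meaning $V \not\subset W$ and $W \not\subset V$), the components $\tilde D_V$ and $\tilde D_W$ are disjoint in $\tilde Z$. Indeed, if $(V_k)_{k=1,\dots,r}$ does not form a chain up to reordering, then some pair $V_i, V_j$ must be incomparable, and the disjointness of $\tilde D_{V_i}$ and $\tilde D_{V_j}$ alone forces $\bigcap_k \tilde D_{V_k} = \emptyset$. The plan is to track the two proper transforms through the sequence $Z_0 \leftarrow Z_1 \leftarrow \cdots \leftarrow Z_n$ and identify the precise stage at which they separate.

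To prove the binary claim, set $U := V \cap W \in L(\A)$; then $\dim U < \min(\dim V, \dim W)$, so $U$ is blown up strictly before either $V$ or $W$. I would argue by strong induction on $\dim U$ that at each intermediate stage $Z_k$ with $k \le \dim U$, the proper transforms $\tilde V, \tilde W, \tilde U$ retain their linear structure at the generic point of $\tilde U$: every center blown up at an earlier step is the proper transform of an edge of dimension $< k$, and such an edge is either contained in $U$ (so the blow-up modifies $\tilde U$ only inside a proper closed subset) or incomparable with $U$, in which case the inductive hypothesis already forces its proper transform to be disjoint from $\tilde U$. Consequently $\tilde V \cap \tilde W = \tilde U$ just before blowing up the proper transform of $U$. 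The separation step is then a standard local computation on the projectivized normal bundle $\PP(N_{\tilde U/Z_{\dim U}})$, which at a generic point of $\tilde U$ is $\PP(Z/U)$: the intersections of the proper transforms of $V$ and $W$ with the exceptional divisor are $\PP(V/U)$ and $\PP(W/U)$, and
\[
\PP(V/U) \cap \PP(W/U) = \PP\bigl((V\cap W)/U\bigr) = \PP(U/U) = \emptyset.
\]
Thus $\tilde V$ and $\tilde W$ become disjoint in $Z_{\dim U + 1}$; further blow-ups have centers supported on proper transforms of larger edges and cannot reintroduce an intersection between already-disjoint loci.

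Given the disjointness statement, the multiplicity bound follows at once: if components $\tilde D_{V_1}, \dots, \tilde D_{V_r}$ meet at $y$, they form (after reordering) a chain $V_1 \subsetneq \cdots \subsetneq V_r$ with strictly increasing dimensions in $\{0, 1, \dots, n-1\}$, so $r \le n$. The main obstacle in making this rigorous is the simultaneous inductive bookkeeping: the separation of $V, W$ at stage $\dim U$ genuinely depends on the corresponding separation for smaller incomparable pairs, so the proposition must be proved by strong induction on $\dim U$ (equivalently, on the rank of the sublattice $L(\A)_{\le U}$). Once this framework is set up, each individual step reduces to the transparent local model of blowing up a linear subspace of a vector space, and the normal-crossing structure of $\tilde D$ falls out from the same local description.
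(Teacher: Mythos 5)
The paper does not actually prove this proposition: it invokes Proposition (2.3) of Budur--Saito \cite{BS} and asserts that the same proof applies in the affine-cone setting. Your strategy --- reduce to a single pair of incomparable edges $V,W$, follow their proper transforms up the tower $Z_0\leftarrow\cdots\leftarrow Z_n$, and observe that they separate at the moment $U=V\cap W$ is blown up because $\PP(V/U)\cap\PP(W/U)=\PP(0)=\emptyset$ --- is the natural one and is essentially the mechanism behind the cited result, so the overall approach is sound, as is the reduction of the chain condition to the binary disjointness claim and the induction on $\dim(V\cap W)$ to neutralize interference from incomparable centers.

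There is, however, a concrete gap in the separation step. Disjointness of $\tilde D_V$ and $\tilde D_W$ requires that the proper transforms of $V$ and $W$ in $Z_{\dim U}$ meet exactly along the proper transform of $U$ and become disjoint after blowing it up \emph{over every point of} $\tilde U$; your identification of the exceptional fibre with $\PP(Z/U)$ is only valid at the generic point of $\tilde U$, where the tower is still an isomorphism onto $Z$. Over the special locus of $\tilde U$ --- the part lying in the exceptional divisors of the earlier blow-ups of edges $T\subsetneq U$ --- you have verified neither that $\tilde V\cap\tilde W$ acquires no extra components nor that the normal directions of $\tilde V$ and $\tilde W$ along $\tilde U$ stay distinct there. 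Closing this requires upgrading the inductive hypothesis from a generic-point statement to a pointwise local one (e.g. adapted coordinates at every point of $\tilde U$ in which $\tilde U$, $\tilde V$, $\tilde W$ and the accumulated exceptional divisors are simultaneously normal crossing and $\tilde V$ meets $\tilde W$ transversally along $\tilde U$); this is exactly the bookkeeping you flag as the main obstacle, and it is the substance of the proof, not a formality. A second, smaller imprecision: for $\dim V\le n-1$ the component $\tilde D_V$ is not the proper transform of $V$ but the exceptional divisor over it, so you need the (easy) extra remark that $\tilde D_V$ maps into the proper transform of $V$ under $Z_n\to Z_{\dim U+1}$ before disjointness of the proper transforms yields $\tilde D_V\cap\tilde D_W=\emptyset$. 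The deduction of the multiplicity bound from the chain condition is fine, granting the paper's convention that the components are indexed by edges of dimension at most $n-1$.
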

Let $\tilde Q=Q \circ \tilde p$. Then one has an isomorphism
\begin{equation} 
\label{eq19.1}
R\tilde p_*\psi_{\tilde Q,\al}\C_{\tilde Z}= \psi_{Q,\al}\C_Z
\end{equation}
compatible with the $N$-actions. The order of  $N$, acting on the right hand side is bounded by $n$,
by the results in the section (3.2) of \cite{Sa2} and Proposition \ref{propPROP}. Hence $N^n=0$
on both sides of \eqref{eq19.1}. One may also use Theorem 2.14 in \cite{Sa0}.
In view of Proposition \ref{propLEMMA} this completes the proof of Theorem \ref{thm2}.
\endproof

\subsection{Proof of Theorem \ref{thm2'}} \label{subsec2'}

\proof

The first claim is rather obvious in view of Kato-Matsumoto Theorem, see \cite{D0}, Theorem (3.2.2).

The second claim follows from Theorem \ref{thm1}: indeed, it follows from \cite{St} that in this case
$H^{n+1}_0(X_V)$ is a pure HS of weight $n+1$. This fact was also proved in \cite{DP}.

For the last claim, using again Theorem \ref{thm1}, we have to show that $H^n_0(X_V,\Q)$ has only weights $n-1$ and $n$. But this was already noticed in the final part of section 3.

\endproof

The following example shows that Theorems \ref{thm2} and \ref{thm2'} are quite sharp.

\begin{ex}
\label{ex4} We show that $Gr^W_4H^2(F_V,\Q)\ne 0$ for some curves $V$ in $\PP^2$.
As above, again using Theorem \ref{thm1}, we have to show that one may have $W_0H^2_0(X_V,\Q)\ne 0$. 
Let $V$ be an irreducible plane curve having a singularity $(V,a)$ whose local monodromy operator is not of finite order, e.g. suppose that a local equation for $(V,a)$ is 
$$(x^2+y^3)(y^2+x^3)=0.$$
Then the resolution graph of the corresponding sigularity for the surface $X_V$ has at least one cycle, see \cite{N1}. This implies that  the cohomology $H^1(K)$ of the corresponding link has elements of weight 0 (dual to the elements of weight 4 in $H^2(K)$ described in \cite{D0}, Example (C29), p. 245). Then an application of Corollary \ref{cor2} with $p=q=0$ yields the claimed result.

\end{ex}

\section{Computation of Hodge-Deligne polynomials for line arrangements} \label{sec7}

Now we start the proof of Theorem \ref{thm3}. For this we use an idea already present in \cite{D1}, p. 380.
Let $X$ be a hypersurface in
$\PP^{n+1}$, with $n \geq 2$, having only isolated singularities 
$\Sigma=\{a_1,..., a_m\}.$
 Let $F_s$ be the Milnor fiber of the singularity $(X,a_s)$.
Steenbrink \cite{St} has constructed a MHS on $H^*(F_s)$ such that the following is a MHS exact sequence.
\begin{equation} 
\label{eq19}
0 \to H^n(X) \to H^n(X_{\infty}) \to \oplus_sH^n(F_s) \to H^{n+1}(X) \to 0.
\end{equation}
Here $X_{\infty}$ is a smooth surface in $\PP^{n+1}  $, nearby $X$, regarded as a generic fiber
in a 1-parameter smoothing $X_w$ of $X$. Moreover, $H^n(X_{\infty})$ is endowed with the Schmid-Steenbrink limit MHS, whose Hodge filtration will be denoted by $F_{SS}$.
The Hodge filtration  $F_{SS}$ on $H^n(X_{\infty})$, being the limit of the Deligne
Hodge filtration $F$ on $H^n(X_{w})$, yields for any $p$  isomorphisms
\begin{equation} 
\label{eq20}
Gr_{F_{SS}}^pH^n(X_{\infty})=Gr_{F_{}}^pH^n(X_{w})
\end{equation}
of $\C$-vector spaces (i.e. equality of dimensions).
Note that our smoothing $X_w$ can be constructed in a $\mu_d$-equivariant way, e.g. just take $X_w$ to be the zeroset in $\PP^{n+1}$ of a polynomial of the form
$Q_1(x)+wR_1(x)$ with $R_1$ a generic homogeneous polynomial of degree $d$ in
$\C[x]$. With such a choice, the isomorphisms \eqref{eq20} become equalities in the representation ring $R(\mu_d)$. Moreover, these representations can be explicitely determined, since they coincide with the representations computed as in the Example below
(by a standard deformation argument).

\begin{ex}
\label{ex2}
Let $Y$ be the smooth surface in $\PP^3$ defined by $x^d+y^d+z^d+t^d=0$ with the $\mu_d$-action induced by that on $\PP^3$ described above.
Using the description of the vector spaces $Gr_{F_{}}^pH_0^2(Y)$ in terms of rational differential forms \`a la Griffiths, see for instance \cite{D0}, it follows that $H^{p,2-p}(d)=Gr_{F_{}}^pH_0^2(Y)$ is the following
$\mu_d$-representation:

(i) if $p=2$, then the multiplicity of the representation $\C_{\lambda^k}$ is $0$ for
$k=1,2$ and ${k-1 \choose 2}$ for $k=3,...,d-1$.

(ii) if $p=0$, since $\overline {H^{2,0}(d)}={H^{0,2}(d)}$, it follows by conjugating (i)
that  the multiplicity of the representation $\C_{\lambda^k}$ is $0$ for
$k=d-1,d-2$ and ${d-k-1 \choose 2}$ for $k=1,...,d-3$.

(iii) If we denote by $h^{p,q}(\al)$ the  multiplicity of the representation $\C_{\al}$
for $\al \in \mu_d$, $\al \ne 1$ in the representation $H^{p,q}(d)$ above, the multiplicities $h^{1,1}(\al)$ are determined using (i), (ii) and the formula
$$h^{2,0}(\al)+h^{1,1}(\al)+h^{0,2}(\al)=d^2-3d+3.$$

\end{ex}
For each $p=0,1,2$, the exact sequence \eqref{eq19} yields an exact sequence of $\mu_d$-modules
\begin{equation} 
\label{eq21}
0 \to Gr_{F_{}}^pH_0^2(X) \to Gr_{F_{SS}}^pH_0^2(X_{\infty}) \to \oplus_sGr_{F_{}}^pH^2(F_s) \to Gr_{F_{}}^pH^3(X) \to 0.
\end{equation}
If $H$ is $\mu_d$-MHS and $\al \in \mu_d$, we use the notation $h^{p,q}(H,\al)$ for the multiplicity of the representation $\C_{\al}$ in the representation $H^{p,q}(H)$.
With this notation, the exact sequence \eqref{eq21} and Corollary \ref{cor2} yield the following.

\begin{prop}
\label{prop2}
For $p+q=n$, one has
$$h^{p,q}(H^n_0(X),\al)=h^{p,q}(\al)+h^{p,q+1}(H^{n+1}(X),\al)+h^{p+1,q}(H^{n+1}(X),\al)-$$
$$~ ~~~~ ~ ~ ~ -\sum_s(
h^{p,q}(H^n(F_s,\al)+h^{p,q+1}(H^n(F_s,\al)+h^{p+1,q}(H^n(F_s,\al)).$$

\end{prop}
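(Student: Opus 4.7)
The strategy is to combine the exact sequence \eqref{eq21} with the weight information assembled in Section \ref{sec3} and Corollary \ref{cor2}, working throughout in the representation ring $R(\mu_d)$. Although \eqref{eq21} was displayed only for $n=2$, it is obtained by applying the exact functor $Gr^p_F$ to Steenbrink's sequence \eqref{eq19}, so it extends verbatim to arbitrary $n\geq 2$ once the smoothing $X_w$ is chosen $\mu_d$-equivariantly as in the discussion preceding Example \ref{ex2}.

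Passing to alternating sums in $R(\mu_d)$, the first step produces the identity
$$Gr^p_F H^n_0(X) \;=\; Gr^p_{F_{SS}} H^n(X_\infty) \;-\; \sum_s Gr^p_F H^n(F_s) \;+\; Gr^p_F H^{n+1}(X).$$
Next I would decompose each $Gr^p_F$ into its Hodge-Deligne summands using the weight bounds: $H^n_0(X)$ has weights $n{-}1$ and $n$; $H^n(X_\infty)$ is pure of weight $n$ since $X_w$ is smooth projective, by \eqref{eq20}; the weighted homogeneous hypothesis on the singularities forces $H^n(F_s)$ to carry only weights $n$ and $n{+}1$; and $H^{n+1}(X)$ is pure of weight $n{+}1$ by Steenbrink's results. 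Extracting the $\al$-isotypic component with $p+q=n$ then yields a numerical relation in which the only term on the left-hand side not already appearing in the stated formula is $h^{p,q-1}(H^n_0(X),\al)$.

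The final step is to eliminate this extra term by invoking Corollary \ref{cor2} at indices $(p',q')=(p,q-1)$, so that $p'+q'=n-1$. The output is a term of the form $h^{q,p+1}(\iota H^{n+1}_0(X),\al)$, which I would convert into $h^{p+1,q}(H^{n+1}(X),\al)$ by composing $h^{a,b}(\iota H,\al)=h^{a,b}(H,\overline\al)$ from \eqref{eq5.1} with $h^{b,a}(H,\overline\al)=h^{a,b}(H,\al)$, a consequence of \eqref{eq5.2}. The replacement of $H^{n+1}_0(X)$ by $H^{n+1}(X)$ costs nothing on the isotypic components involved, since the image of $H^{n+1}(\PP^{n+1})$ carries only the trivial $\mu_d$-character and a diagonal Hodge type. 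Rearranging then recovers the proposition exactly.

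The chief obstacle is purely bookkeeping: one must simultaneously track three decompositions (by Hodge level, by weight, and by $\mu_d$-character) and feed Corollary \ref{cor2} in with the correct index translation under $\iota$ and complex conjugation so that the signs cancel and the indices line up with those appearing in the proposition's formula.
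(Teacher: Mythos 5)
Your argument is correct and follows essentially the same route as the paper: the paper also obtains \eqref{eq21} by applying $Gr_F^p$ to Steenbrink's sequence \eqref{eq19}, decomposes each term according to the weight bounds established in Section \ref{sec3} (weights $n-1,n$ on $H^n_0(X)$; purity of $H^n(X_w)$ and of $H^{n+1}(X)$; weights $n,n+1$ on $H^n(F_s)$), and then eliminates the leftover $h^{p,q-1}(H^n_0(X),\al)$ term via Corollary \ref{cor2} together with the involution/conjugation identities. The only cosmetic discrepancy is that the identity $h^{a,b}(\iota H,\al)=h^{a,b}(H,\overline\al)$ you use is the representation-ring statement from Example \ref{ex1} rather than \eqref{eq5.1} (which also negates the Hodge indices, being about the dual MHS), but this does not affect the computation.
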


\begin{ex}
\label{ex3}
The Ceva (or Fermat) arrangement $A(3, 3, 3)$ is defined by
$$Q = (x^3 - y^3)(x^3 - z^3)(y^3 - z^3)=0.$$
The monodromy action on $H^1(F,\C)$
 has only three eigenvalues:
$1$ (with multiplicity $8$), $\al_1=\exp(2\pi i/3)=\lambda^3$ (with multiplicity $2$), and  $\al_2=\exp(4\pi i/3)=\lambda^6$ (with multiplicity $2$), see \cite{BDS}, 3.2. (i) and also \cite{Z}
for more on this line arrangement.

This arrangement has nine lines, no ordinary double
points, and twelve triple points. 
It follows that the surface $X$ has in this case 12 singularities with local equation
$$ a^3+b^3+c^9=0.$$
A local computation using \cite{D0}, see particularly p. 194, implies that each $H^2(F_s)$
has weights 2 and 3. The part of weight 3 has dimension 2 and the corresponding representations are
\begin{equation} 
\label{eq22}
H^{2,1}(H^2(F_s))=\C_{\lambda^6} \text { and } H^{1,2}(H^2(F_s))=\C_{\lambda^3} 
\end{equation}
with $\lambda=\exp(2\pi i/9)$.
The part of weight 2 has dimension 30 and one has 
\begin{equation} 
\label{eq23}
H^{2,0}(H^2(F_s))=\C_{\lambda^7}  \oplus \C_{\lambda^8} \text { and } H^{0,2}(H^2(F_s))=
\C_{\lambda}  \oplus \C_{\lambda^2}.
\end{equation}
The remaining representation $H^{1,1}(H^2(F_s))$ has dimension 26 and is determined by the equalities $h^{1,1}(H^2(F_s),\lambda^k)=3$ for $0<k<4$ or $5<k<9$ and 
$h^{1,1}(H^2(F_s),\lambda^k)=4$ for $k=4,5$.
It follows that 
\begin{equation} 
\label{eq24}
H^{2,1}(H^3(X))=2 \C_{\lambda^6} \text { and } H^{1,2}(H^3(X))=2\C_{\lambda^3}.
\end{equation}
Finally, using Corollary \ref{cor2}, we get
$$h^{1,2}(H^2(F),\al)=h^{1,0}(H^2_0(X),\overline \al)= 7$$
for $\al= \lambda^3$; in particular there are elements of weight 3 in $H^2(F)_{\ne 1}$.
And using Proposition \ref{prop2}, we get also that
$$h^{0,2}(H^2(F),\al)=h^{2,0}(H^2_0(X),\overline \al)\ne 0$$ 
for $\al= \lambda^5$; in particular there are also elements of weight 2 in $H^2(F)_{\ne 1}$.

\end{ex}

Finally, we prove Theorem \ref{thm3}. For the first claim we have to show that the multiplicities $m_{a}$
can be expressed in terms of the listed invariants and $\be=\exp(-2\pi i a)$.
The case when $a$ is an integer is very easy, using the identification $H^*(F,\Q)_1=H^*(M,\Q)$
and the well known fact that the Betti numbers of $M$ can be expressed in terms of the listed invariants.

We treat the case $1 <a < 2$ and leave the other cases, which are entirely similar and easier, to the reader.
In the case $1 <a< 2$, one has, with notation from the Introduction
$$m_{a}=- h^{1,0}(H^1(F),\be)+h^{1,1}(H^2(F),\be)+h^{1,2}(H^2(F),\be).$$
Using Corollary \ref{cor1}, we have
$$m_{a}=- h^{1,2}(H^3(X),\overline \be)+h^{1,1}(H_0^2(X),\overline \be)+h^{1,0}(H_0^2(X),\overline \be).$$
By Proposition \ref{prop2} we get
\begin{equation} 
\label{eq25}
h^{1,1}(H_0^2(X),\overline \be)=h^{1,1}(\overline \be)+ h^{1,2}(H^3(X),\overline \be)+ h^{2,1}(H^3(X),\overline \be)-LC1 
\end{equation}
where $LC1$ is a number depending only on local invariants at the singularities, i.e. computable from the invariants $d$ and $m_k$ for $k \geq 2$ and $\be$. Using Corollary \ref{cor2} we also get
$$h^{1,0}(H_0^2(X),\overline \be)=LC2-h^{2,1}(H^3(X),\overline \be)$$
where $LC2$ is another local constant as above.
It follows that in the last formula for $m_{a}$ the subtle invariants related to $X$ cancel out and the result involves only the local constants $LC1$ and $LC2$ in addition to the number $h^{1,1}(\overline \be)$
which depends only on $d$ and $\be$.

In fact, this computation yields the following formula
\begin{equation} 
\label{eq26}
m_{a}=h^{1,1}(\gamma)-\sum_s(h^{1,1}(H^2(F_s),\gamma)+ h^{1,2}(H^2(F_s),\gamma))
\end{equation}
with $1 <a < 2$ and $\gamma=\exp(2\pi ia)$.

A similar argument applies to any any hypersurface having only isolated singularities, in view of our Theorem
\ref{thm2'}.

\medskip

To show that the Hodge-Deligne polynomial $P^{\mu_d}(F)$ corresponding to the monodromy action
is not determined by $d$ and by the numbers $m_k$ of points of multiplicity
$k \geq 2$ in $V$, it is enough to find one coefficient which involves invariants associated to $X$.
Indeed, it is known that there are line arrangements $\A$ and $\A'$, having the same list of invariants and with different values for some $h^{1,2}(H^3(X), \be)$, see for instance Theorem 6.4.15 and its proof, pp. 212-213 in \cite{D0}. If $\be \ne 1$, then the multiplicity of $\C_{\be}$ in the virtual representation
$E^{\mu_d; 1,1}(F)$, which is the coefficient of $uv$  in $P^{\mu_d}(F)$,  is $h^{1,1}(H_0^2(X), \be)$.
Now the formula \eqref{eq25} with $\be$ replacing $\overline \be$ completes the proof.

\appendix

\section{Rational points over finite fields and equivariant Hodge-Deligne polynomials.}

\subsection{The setting}\label{sec:setting}

Let $X$ be a variety over $\CO[\frac{1}{n}]$, where $\CO$ is the ring of integers 
of an algebraic number field $F$, and suppose that the finite group $\Gamma$
acts as a group of scheme automorphisms on $X$. Then $X$ has (compact support) 
equivariant Hodge-Deligne modules
$H^{d,e}(H^j_c(X(\C),\C)\in R(\Gamma)$ defined as in \ref{def1} above, 
and correspondingly, the equivariant Hodge-Deligne polynomial $P^\Gamma_c(X)(x,y)\in R(\Gamma)[x,y]$,
also defined in Definition \ref{def1}.
If $\Gamma=1$, we have the usual Hodge-Deligne numbers \cite{KL2} given by
\beq
h^{d,e}(j):=\dim_\C\Gr_\bF^d\Gr_{\bar{\bF}}^eH^j_{c}(X(\C),\C).
\ee

The Euler-Hodge numbers of $X$ are given by
\beq\label{eq:defeulhodge}
h^{d,e}:=\sum_j(-1)^jh^{d,e}(j),
\ee

and the (non-equivariant, compact supports) Hodge-Deligne polynomial by
\beq
P_c(X)(x,y):=\sum_{d,e}h^{d,e}x^dy^e.
\ee
In this appendix,
we amplify some of the results of \cite{KL2} to make more explicit connections 
between the Hodge-Deligne polynomials of $X$ and the eigenvaues of (possibly twisted) Frobenius endomorphisms on the 
$p$-adic \'etale cohomology of the reduction modulo various primes of $X$.

In particular, we show how to deduce a result of Katz \cite{HVK} by this means,
and give an equivariant analogue (Theorem \ref{thm:katzev}) of that result.
Our argument uses the $K$ group of representations of the Galois group, rather
than the $K$ group of schemes, as Katz did.

\subsection{Background in $p$-adic Hodge theory} We recall the basic setup, and 
amplify some results of \cite{KL2}.

\noindent{\it Notation} We shall use the notation of \cite{KL2}. In particular,
$F$ is a number field, $S$ a finite set of primes in $F$, $\bar F$ an algebraic
closure of $F$, and $F_S\subset \bar F$ the maximal extension of $F$
which is unramified outside $S$. Write $G_{F,S}=\Gal(F_S/F)$, and for 
a prime of $F$ $v\not\in S$, write $\Frob_v$ for the corresponding geometric
Frobenius automorphism in $G_{F,S}$. Write $q_v$ for the cardinality of
the residue field of $v$. We shall often write $\Frob_q$ for $\Frob_v$ if
$q_v=q$.

For a rational prime $p$ such that $S$ contains all the prime divisors of 
$p$ in $F$, denote by $\Q_p(i)$ the $i^{\text th}$ tensor power of the 
one dimensional cyclotomic representation of $G_{F,S}$ over $\Q_p$. 
For each prime $\fp$ dividing $p$, fix an algebraic closure $\bar F_\fp$ 
of the $\fp$-adic completion $\F_\fp$ of $F$ at $\fp$. Fix an embedding 
$\bar F\to \bar F_\fp$ 
and denote the corresponding
decomposition group by $G_{F_\fp}$. There is then a canonical homomorphism
$G_{F_\fp}\to G_{F,S}$, and representations of $G_{F,S}$ may therefore be
restricted to $G_{F_\fp}$.

We refer to \cite[\S 2]{KL2} for properties of Fontaine's filtered field
$B_{dR}$. The relevant notation we require is as follows. The field
$B_{dR}$ is discretely
valued and contains $F_\fp$. Its residue field is denoted $\C_p$,
and its decreasing filtration is denoted $\Fil^\bullet B_{dR}$.
If $V$ is a finite dimensional continuous $\Q_pG_{F_\fp}$-module,
recall that $V$ is said to be de Rham if
$\dim_{F_\fp}(B_{dR}\otimes_{\Q_p} V)^{G_{F_\fp}}=\dim_{\Q_p}(V)$.
We note that it is pointed out in \cite{KL2} that it follows from
arguments of Faltings, Tsuji and Kisin that any subquotient of
$H^j_c(X,\Q_p)$ is de Rham.

\subsection{Cohomology and eigenvalues of Frobenius}\label{ss:coh-frob}
Recall that the de Rham cohomology $H_{c,dR}^j(X)$ is 
an $F$-vector space with a decreasing (Hodge) filtration $\bF^\bullet$,
whose complexification $H_c^j(X(\C),\C):=
H_{c,dR}^j(X)\otimes_F\C$ correspondingly has two decreasing filtrations 
$\bF^\bullet,\bar\bF^\bullet$, 
as well as the increasing weight filtration $\bW_\bullet$. The associated graded
components of these filtrations are related by 
\beq\label{eq:filtgen}
\Gr^\bW_mH_{c,dR}^j(X)\otimes_F\C=\oplus_{d+e=m}\Gr^d_\bF\Gr^e_{\bar\bF}H_{c}^j(X(\C),\C).
\ee
The $p$-adic cohomology $H^j_c(X,\Q_p)$ also
has a weight filtration (cf. \cite[(2.1.5)]{KL2}) $\bW_\bullet H^j_c(X,\Q_p)$, 
whose associated graded parts are denoted
$\Gr^\bW_mH^j_c(X,\Q_p)$. 
The eigenvalues of $\Frob_v$ (see above) on 
$\Gr^\bW_mH^j_c(X,\Q_p)$ are known to be of the form $\zeta q_v^{\frac{m}{2}}$,
where $\zeta$ is an algebraic number which has absolute value $1$ in any embedding
$\Q_p\to \C$. We fix such an embedding, and denote the eigenvalues of $\Frob_v$
on $\Gr^\bW_mH^j_c(X,\Q_p)$ by $\zeta_{m,k}^jq^{\frac{m}{2}}$, $k=1,2,\dots,d_m^j$,
where $d_m^j=\dim_{\Q_p} \Gr^\bW_mH^j_c(X,\Q_p)$.

\subsection{Filtrations and comparison theorems} Recall the following
facts from \cite{KL2}.
We have (cf. \cite[(2.1.3)]{KL2}) the following isomorphism of filtered
$F_\fp G_{F_\fp}$-modules for each $j$:
\beq\label{eq:fil1}
H_{c,dR}^j(X)\otimes_F B_{dR}\overset{\sim}{\lr}
H^j_c(X,\Q_p)\otimes_{\Q_p}B_{dR},
\ee
where on the left, the filtration is the tensor product of $\Fil^\bullet$ on $B_{dR}$
and $\bF^\bullet$ on $H_{c,dR}^j(X)$, while on the right side the filtration
comes from just the filtration on $B_{dR}$. Moreover the isomorphism \eqref{eq:fil1}
respects the weight filtrations on the two cohomology theories 
(see \cite[Lemma (2.1.4), Cor. (2.1.5)]{KL2}).

Since $\Gr_{\Fil}^k(B_{dR})\simeq \C_p(k)$ 
as $F_\fp G_{F_\fp}$-module,
where $\C_p(k)$ denotes the $k^{\text{th}}$ Tate twist of the cyclotomic character,
we obtain the following isomorphism of $F_\fp G_{F_\fp}$-modules by taking
the weight $m$ component of the degree $d$ associated graded of the filtered
spaces in \eqref{eq:fil1}.

\beq\label{eq:fil2}
\oplus_{i=0}^d\Gr_\bF^i\Gr_m^\bW H_{c,dR}^j(X)\otimes_F \C_p(d-i)\overset{\sim}{\lr}
\Gr_m^\bW H^j_c(X,\Q_p)\otimes_{\Q_\fp}\C_p(d).
\ee

Now take $G_{F_\fp}$-fixed points of both sides of \eqref{eq:fil2}.
Since $G_{F_\fp}$ has trivial action on $H^j_{c,dR}(X)$, 
and $\C_p(d)^{G_{F_\fp}}=0$ if $d\neq 0$, while $\C_p(0)^{G_{F_\fp}}=F_\fp$
(see \cite{T}) the left side becomes
$\Gr_\bF^d\Gr_m^\bW H^j_{c,dR}(X)\otimes_F F_\fp$, which has
$F_\fp$-dimension $h^{d,m-d}(j)$. We therefore have, for each $j,m$ and $d$,
\beq\label{eq:fil3}
\Gr_\bF^d\Gr_m^\bW H^j_{c,dR}(X)\otimes_F F_\fp\overset{\sim}{\lr}
\left(\Gr_m^\bW H^j_c(X,\Q_p)\otimes_{\Q_\fp}\C_p(d)\right)^{G_{F_\fp}},
\ee
and taking dimensions over $F_\fp$ we obtain
\beq\label{eq:hdm}
h^{d,m-d}(j)=\dim_{F_\fp}
\left(\Gr_m^\bW H^j_c(X,\Q_p)\otimes_{\Q_\fp}\C_p(d)\right)^{G_{F_\fp}}.
\ee
We shall make use of the Grothendieck ring $R(\Q_pG_{F,S})$ of finite dimensional
continuous $\Q_pG_{F,S}$ representations. If $R$ is such a representation,
we write $[R]$ for its class in $R(\Q_pG_{F,S})$. Every such element is equal to 
$\sum_i [S_i]$ where $S_i$ is a simple $\Q_pG_{F,S}$-module, and if 
$[R]= \sum_i [S_i]$, we say that $\oplus_iS_i$ is the semisimplification of 
$R$, and write $R_{ss}=\oplus_iS_i$.

\subsection{Katz's theorem}

We shall show how the above considerations may be used to prove the 
following result of Katz.

\begin{thm}\label{thm:katz}(Katz, \cite[Appendix]{HVK})
Suppose there is a polynomial $P_X(t)\in \C[t]$ such that for almost all $q$,
$|X(\Fq)|=P_X(q)$. Then $h^{d,e}=0$ if $d\neq e$, and $P_c(X)(x,y)=P_X(xy)$.
\end{thm}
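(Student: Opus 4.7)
The strategy is to lift the numerical identity $|X(\Fq)|=P_X(q)$ to an identity of virtual continuous $\Q_pG_{F,S}$-representations via Chebotarev density, and then to extract the Hodge-Deligne numbers via the $p$-adic Hodge-theoretic formula \eqref{eq:hdm}. The Grothendieck-Lefschetz trace formula gives, for almost all primes $\fp$ of $\CO$ outside $S$ with residue cardinality $q=q_\fp$,
$$|X(\Fq)|=\sum_j(-1)^j\Tr(\Frob_\fp\mid H^j_c(X_{\bar F},\Q_p)),$$
and the hypothesis rewrites this as $P_X(q)=\sum_i a_iq^i=\sum_i a_i\Tr(\Frob_\fp\mid\Q_p(-i))$, since $\Frob_\fp$ acts on $\Q_p(-i)$ as multiplication by $q^i$ in the geometric-Frobenius convention.

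By Chebotarev the $\Frob_\fp$ are dense in $G_{F,S}$, and traces are continuous, so the equality extends to an identity of virtual characters on all of $G_{F,S}$. Since characters determine semisimple virtual representations in characteristic zero, we obtain
$$\sum_j(-1)^j[H^j_c(X,\Q_p)]=\sum_i a_i[\Q_p(-i)]\quad\text{in }R(\Q_pG_{F,S}).$$
The representation $\Q_p(-i)$ is pure of weight $2i$, so taking weight-$m$ components yields $\sum_j(-1)^j[\Gr^W_mH^j_c(X,\Q_p)]=a_i[\Q_p(-i)]$ when $m=2i$ and $0$ when $m$ is odd.

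To extract the Hodge numbers, take the alternating sum over $j$ of the formula \eqref{eq:hdm}:
$$h^{d,e}=\sum_j(-1)^j\dim_{F_\fp}\bigl(\Gr^W_{d+e}H^j_c(X,\Q_p)\otimes_{\Q_p}\C_p(d)\bigr)^{G_{F_\fp}}.$$
The Hodge--Tate fixed-point functor $V\mapsto(V\otimes_{\Q_p}\C_p(d))^{G_{F_\fp}}$ is exact on de Rham $G_{F_\fp}$-representations, and all subquotients of $H^j_c(X,\Q_p)$ are de Rham by the results of Faltings, Tsuji and Kisin cited in section \ref{sec:setting}; hence this Euler characteristic depends only on the virtual Grothendieck class of the graded piece computed above. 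Substituting the identity and invoking Tate's theorem $\C_p(k)^{G_{F_\fp}}=F_\fp$ for $k=0$ and $0$ otherwise, the only nonzero contribution comes from $m=d+e=2i$ with $d=i$, yielding $h^{i,i}=a_i$ and $h^{d,e}=0$ for $d\ne e$. This is equivalent to $P_c(X)(x,y)=\sum_i a_i(xy)^i=P_X(xy)$.

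The principal obstacle is the bridging step: converting a numerical trace identity (valid only on Frobenius elements) into a representation-theoretic identity in $R(\Q_pG_{F,S})$ that can be broken down by weight \emph{and} Hodge filtration. This requires Chebotarev density, continuity of characters, and crucially the exactness of the Hodge--Tate fixed-point functor on de Rham representations; without the latter the alternating-sum interpretation of \eqref{eq:hdm} would fail to descend to the Grothendieck group, and the argument would break.
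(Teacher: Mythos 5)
Your proposal is correct and follows essentially the same route as the paper: Grothendieck--Lefschetz plus density of Frobenius elements to upgrade the point-count identity to an equation in $R(\Q_pG_{F,S})$, decomposition by weight, and then extraction of $h^{d,e}$ via the comparison isomorphism \eqref{eq:hdm} together with Tate's computation of $\C_p(k)^{G_{F_\fp}}$. The only (inessential) difference is in justifying that the fixed-point dimensions depend only on the class in the Grothendieck group: you invoke additivity of the Hodge--Tate functor on de Rham representations, whereas the paper observes directly from \eqref{eq:fil2} that $V\otimes_{\Q_p}\C_p$ is semisimple and so $V$ may be replaced by its semisimplification --- two phrasings of the same fact.
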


The term ``almost all'' here means that for all but finitely
many rational primes $\ell$,
there is a power $q_\ell$ of $\ell$ such that the assertion holds for $q=q_\ell^r$,
for any $r$.

\begin{proof}[Proof of Katz's theorem]
We are given a polynomial $P_X(t)=\sum_{i=0}^na_{2n}t^n$, such that
for almost all $q$, $|X(\Fq)|=P_X(q)$. Define the constants $c_i$, $i=0,1,\dots, n$
by 
\beq\label{eq:ci}
c_i=
\begin{cases}
a_i\text{ if $i$ is even}\\
0\text{ otherwise}
\end{cases}
\ee

By the Grothendieck fixed point theorem, we have 
\beq\label{eq:groth}
\begin{aligned}
|X(\Fq)|=&\sum_{j=0}^{2\dim(X)}(-1)^j\Trace(\Frob_q, H^j_c(X,\Q_p))\\
&=\sum_m \sum_{j=0}^{2\dim(X)}(-1)^j\Trace(\Frob_q,\Gr_m^\bW H^j_c(X,\Q_p))\\
\end{aligned}
\ee

Now all modules $\Gr_m^\bW H^j_c(X,\Q_p))$ are represented in the Grothendieck ring $R(\Q_pG_{F,S})$,
and modules with equal trace functions on almost all $\Frob_v$ are equal in $R(\Q_pG_{F,S})$. Using
the fact that the concept of weight as defined by the eigenvalues of Frobenius 
coincides with that arising from
Hodge theory \cite{De}, it follows by taking the pieces of weight $m$ in
\eqref{eq:groth} that the following equation holds in $R(\Q_pG_{F,S})$.
\beq\label{eq:hstar}
\sum_j(-1)^j[\Gr_m^\bW H^j_c(X,\Q_p)]=c_{m}\Q_p(-\frac{m}{2}).
\ee
Note that the right side of \eqref{eq:hstar} is zero if $c_m=0$, in particular if $m$ is odd. 
Write
$$
\begin{aligned}
V^e_m:=&\oplus_{j\text{ even}}\Gr_m^\bW H_c^j(X,\Q_p)\text{ and}\\
V^o_m:=&\oplus_{j\text{ odd}}\Gr_m^\bW H_c^j(X,\Q_p).\\
\end{aligned}
$$ 

It follows from 
\eqref{eq:fil3} that (cf. \eqref{eq:defeulhodge})
\beq\label{eq:fil4}
h^{d,m-d}=\dim_{F_\fp}(V^e_m\otimes \C_p(d))^{G_{F_\fp}}
-\dim_{F_\fp}(V^o_m\otimes \C_p(d))^{G_{F_\fp}}.
\ee

We observe next that in \eqref{eq:fil4}, we may replace $V^e_m$ etc. by their semisimplifications.
To see this, let $V=V^e_m$; then clearly 
$\left(V\otimes_{\Q_p}\C_p(d)\right)_{ss}=V_{ss}\otimes_{\Q_p}\C_p(d)$
as $G_{F_\fp}$-modules. But it follows from \eqref{eq:fil2} that 
$V\otimes_{\Q_p}\C_p$, and hence also $V\otimes_{\Q_p}\C_p(d)$, is semisimple,
and therefore equal to its semisimplification. Thus 
$V_{ss}\otimes_{\Q_p}\C_p(d)\simeq V\otimes_{\Q_p}\C_p(d)$.

It then follows from \eqref{eq:fil4}and \eqref{eq:hstar} that
\beq\label{eq:fil5}
h^{d,m-d}=c_m\dim_{F_\fp}(\Q_p(-\frac{m}{2})\otimes \C_p(d))^{G_{F_\fp}}.
\ee
Hence $h^{d,m-d}=0$ unless $m$ is even and $m=2d$, and if this condition
is satisfied, then $h^{d,d}=c_{2d}$. The result is now clear.
\end{proof}

\subsection{Equivariant theory} 

Let $\Gamma$ be a finite group of automorphisms of $X$, where $X$ is as in 
\S\ref{sec:setting}. Then $\Gamma$ preserves all the filtrations discussed above,
and we define the equivariant Hodge numbers by

\beq
h^{d,e}(j,w):=\Trace\left(w,\Gr_\bF^d\Gr_{\bar{\bF}}^eH^j_{c}(X(\C),\C)\right),
\ee
for $w\in \Gamma$.

Similarly we define

\beq\label{eq:defequhodge}
h^{d,e}(w):=\sum_j(-1)^jh^{d,e}(j,w),
\ee

and the equivariant Hodge polynomials by
\beq
P^\Gamma_c(X)(x,y;w):=\sum_{d,e}h^{d,e}(w)x^dy^e.
\ee

We shall prove the 
following equivariant generalization of Katz's theorem.

\begin{thm}\label{thm:katzev}
Suppose there are polynomials $P_X(t;w)=\sum_{i=0}^{2\dim X}a_{2i}(w)t^i\in \C[t]$ 
such that for almost all $q$, and all $w\in \Gamma$, we have
$|X(\overline\Fq)^{w\Frob_q}|=P_X(q;w)$. Then $h^{d,e}(w)=0$ if $d\neq e$, 
and $P^\Gamma_c(X)(x,y;w)=P_X(xy,w)$ for each $w\in \Gamma$. Moreover the function
$w\mapsto a_{2j}(w)$ is a virtual character of $\Gamma$ for each $j$.
\end{thm}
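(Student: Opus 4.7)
The plan is to execute the non-equivariant proof of Theorem \ref{thm:katz} $\Gamma$-equivariantly. The key observation is that the $\Gamma$-action on $X$ is defined over the base ring $\CO[1/n]$, so it commutes with the natural $G_{F,S}$-action on $H^j_c(X,\Q_p)$ and with the $G_{F_\fp}$-action on $B_{dR}$, and all of \eqref{eq:fil1}, \eqref{eq:fil2}, \eqref{eq:fil3} and \eqref{eq:hdm} are functorial in $X$; hence they may be read as identities of $\Q_p[\Gamma\times G_{F,S}]$-modules. First I would invoke the equivariant Grothendieck--Lefschetz trace formula, valid because $w$ and $\Frob_q$ commute,
\[
|X(\overline\Fq)^{w\Frob_q}|=\sum_{j=0}^{2\dim X}(-1)^j\Trace\bigl(w\Frob_q,\,H^j_c(X,\Q_p)\bigr),
\]
set $c_m(w):=a_m(w)$ for $m$ even and $c_m(w):=0$ otherwise (so the hypothesis is $|X(\overline\Fq)^{w\Frob_q}|=\sum_m c_m(w)q^{m/2}$), and split by weight. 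Using Deligne's identification of $p$-adic and Hodge weights, and the Weil-type separation of half-integer powers of $q$ this provides, one obtains, for the virtual class
\[
\alpha_m:=\sum_j(-1)^j[\Gr^\bW_m H^j_c(X,\Q_p)]\in R(\Q_p[\Gamma\times G_{F,S}]),
\]
the trace identity $\Trace\bigl((w,\Frob_v),\alpha_m\bigr)=c_m(w)\,q_v^{m/2}$ for all $w\in\Gamma$ and almost all $v$.

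The core step is to upgrade this trace identity to an identity in $R(\Q_p[\Gamma\times G_{F,S}])$. By Brauer--Nesbitt together with Chebotarev density of the Frobenii in $G_{F,S}$, the semisimplification $\alpha_m^{ss}$ is determined by its trace on all pairs $(w,\Frob_v)$. After decomposing $\alpha_m^{ss}$ into $\Gamma$-isotypic components (extending scalars to a splitting field if necessary and descending) and expanding $c_m(w)=\sum_\sigma d_\sigma\chi_\sigma(w)$ in the basis of irreducible characters of $\Gamma$, the trace identity and linear independence of characters on $\Gamma$ produce, for each $\sigma$, a virtual $\Q_p G_{F,S}$-module $\tau_\sigma$ with $\Trace(\Frob_v,\tau_\sigma)=d_\sigma q_v^{m/2}$ on almost all $v$. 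The non-equivariant argument at the heart of Theorem \ref{thm:katz} then gives $\tau_\sigma=d_\sigma\,[\Q_p(-m/2)]$ in $R(\Q_p G_{F,S})$, and the integrality of $\tau_\sigma$ as a class in a Grothendieck ring forces $d_\sigma\in\Z$. Assembling these pieces yields
\[
\alpha_m^{ss}=r_m\boxtimes\Q_p\!\left(-\tfrac{m}{2}\right)\quad\text{in }R(\Q_p[\Gamma\times G_{F,S}]),
\]
where $r_m=\sum_\sigma d_\sigma\sigma$ is a genuine virtual $\Q_p[\Gamma]$-character satisfying $\Trace(w,r_m)=c_m(w)$. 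This already establishes the last sentence of the theorem, since $a_{2j}(w)=c_{2j}(w)$.

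To finish, I would apply the trace of $w$ to the $\Gamma$-equivariant version of \eqref{eq:hdm}. The left side is $\sum_j(-1)^j h^{d,m-d}(j,w)=h^{d,m-d}(w)$. For the right side, the semisimplicity of $\alpha_m\otimes_{\Q_p}\C_p$ established in the proof of Theorem \ref{thm:katz} propagates through, and substituting $\alpha_m^{ss}=r_m\boxtimes\Q_p(-m/2)$ gives
\[
\Trace\!\left(w,\bigl(\alpha_m\otimes_{\Q_p}\C_p(d)\bigr)^{G_{F_\fp}}\right)=\Trace(w,r_m)\cdot\dim_{F_\fp}\bigl(\C_p(d-m/2)\bigr)^{G_{F_\fp}},
\]
which by Tate's theorem vanishes unless $m=2d$ and equals $a_{2d}(w)$ otherwise. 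Hence $h^{d,e}(w)=0$ for $d\neq e$ and $h^{d,d}(w)=a_{2d}(w)$, giving $P^\Gamma_c(X)(x,y;w)=P_X(xy;w)$. The main obstacle will be the second paragraph: one must verify that the external-tensor factorization $\alpha_m^{ss}=r_m\boxtimes\Q_p(-m/2)$ really holds as an identity of virtual modules over the product group (not merely on the level of character values), so that in particular the multiplicities $d_\sigma$ are integers and $c_m$ defines an honest virtual character of $\Gamma$ rather than merely a $\C$-valued class function.
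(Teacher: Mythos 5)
Your proposal is correct and follows essentially the same route as the paper: both pass to the Grothendieck group $R(\Q_p[\Gamma\times G_{F,S}])$, use the equivariant Lefschetz trace formula together with density of the elements $(w,\Frob_v)$ to identify the weight-$m$ virtual class with $\chi_m\otimes\Q_p(-\tfrac{m}{2})$, and then take $G_{F_\fp}$-invariants of the $p$-adic comparison isomorphism \eqref{eq:evhdm} and invoke Tate's computation of $\C_p(k)^{G_{F_\fp}}$. The only cosmetic difference is that you organize the external factorization by $\Gamma$-isotypic components whereas the paper decomposes by simple $G_{F,S}$-constituents, and the integrality issue you flag at the end is resolved exactly as you suggest (and as the paper does implicitly), since the class $\alpha_m$ lies in $R(\Gamma\times G_{F,S})$ from the outset.
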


\begin{proof}
This is similar to the proof of Katz's theorem above, and we maintain the above 
notation. Write $\Theta:=R(\Gamma\times G_{F,S})$ be the Grothendieck group of
finite dimensional continuous representations of $\Gamma\times G_{F,S}$ over $\Q_p$,
with $\Gamma$ having the discrete topology. Note that the set of elements $(w,\Frob_q)$
is dense in $\Gamma\times G_{F,S}$, so that two elements of $\Theta$ are equal if and
only if each element $(w,\Frob_q)$ has equal traces on the two modules. 

Now any element $\theta$ of $\Theta$ may be written uniquely in the form
$\theta=\sum_\phi \chi_\phi\otimes\phi$, where the (finite) sum is over the
simple representations $\phi$ of $G_{F,S}$, and for each $\phi$, $\chi_\phi\in R(\Gamma)$
is a virtual representation of $\Gamma$. This applies in particular to the 
$\Gamma\times G_{F,S}$ modules $\Gr_m^\bW H^j_c(X,\Q_p)$.

By the Grothendieck fixed point theorem, we have, for any $w\in \Gamma$,
\beq\label{eq:evgroth}
\begin{aligned}
|X(\overline{\Fq})^{w\Frob_q}|=&\sum_{j=0}^{2\dim(X)}(-1)^j\Trace(w\Frob_q, H^j_c(X,\Q_p))\\
&=\sum_m \sum_{j=0}^{2\dim(X)}(-1)^j\Trace(w\Frob_q,\Gr_m^\bW H^j_c(X,\Q_p))\\
&=P_X(q,w),\\
\end{aligned}
\ee
and taking the weight $m$ piece of \eqref{eq:evgroth}, it follows that we have the 
following equation in $\Theta=R(\Gamma\times G_{F,S})$.

\beq\label{eq:evhstar}
\sum_j(-1)^j[\Gr_m^\bW H^j_c(X,\Q_p)]=\chi_m\otimes\Q_p(-\frac{m}{2}),
\ee
where $\chi_m$ is a virtual representation of $\Gamma$ whose character at $w\in \Gamma$ 
is $a_{2m}(w)$.

Next, for any element $\theta=\sum_\phi \chi_\phi\otimes\phi$ of $\Theta$, define the 
$G_{F_\fp}$-invariant part $\theta^{G_{F_\fp}}$ by 
$\theta^{G_{F_\fp}}=\chi_1\in R(\Gamma)$, the coefficient of the trivial representation 
of $G_{F,S}$. This coincides with the $1_{G_{F_\fp}}$-isotypic part of $\theta$ in the case 
of proper representations. It follows from \eqref{eq:fil3} that as $\Gamma$-module,

\beq\label{eq:evhdm}
\Gr_\bF^d\Gr_m^\bW H^j_{c,dR}(X)\otimes_F F_\fp=
\left(\Gr_m^\bW H^j_c(X,\Q_p)\otimes_{\Q_\fp}(1_\Gamma\otimes\C_p(d))\right)^{G_{F_\fp}}.
\ee

Hence by \eqref{eq:evhstar} we have the following equation in $R(\Gamma)$.
Write $H^{d,m-d}$ for the element of $R(\Gamma)$ represented by 
$\sum_j(-1)^j\Gr_\bF^d\Gr_m^\bW H^j_{c,dR}(X)\otimes_F F_\fp$.
Then

\beq\label{eq:evhodge}
H^{d,m-d}=\left(\chi_m\otimes(\Q_p(-\frac{m}{2})\otimes_{\Q_p}\C_p(d))\right)^{G_{F_\fp}}.
\ee

The right side of \eqref{eq:evhodge} is $0$ unless $m=2d$, and is equal to
$\chi_m$ when $m=2d$; the result is now clear.
\end{proof}

\begin{rk} In Remark \ref{rem2} it was pointed out that for the equivariant weight
polynomials $W^\G_c(X)(x)$ of \cite{DL}, we have the relation
$P_c^{\G}(X)(x,x)= W_c^{\G}(X)(x)$. Hence given the conditions of
Theorem \ref{thm:katzev}, the conclusion may be stated as
$P_c^{\G}(X)(x,y)=W_c^{\G}(X)(\sqrt{xy})$.
\end{rk}

\subsection{Further remarks}

We note that the following result is an easy consequence of \cite{KL2}.

\begin{prop}\label{prop:sspart}

Let V be a continuous $\Q_pG_{F,S}$ module. Then 

(i) For fixed integer $i$, let $V_i$ be the subset 
of vectors $x\in V$ such that for almost all $q$, $\Frob_q x=\zeta q^i  x$, for some
root of unity $\zeta$. Then $V_i$ is a subspace of $V$.

(ii) $\Frob_q$ acts semisimply on the subspace $V_T:=\sum_iV_i$ for almost 
all $q$.

\end{prop}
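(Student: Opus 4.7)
The plan is to exploit Chebotarev density of Frobenius elements in $G_{F,S}$ together with the continuity of the Galois action on $V$. The crux of (i) is a rigidification: any nonzero $x\in V_i$ must generate a $G_{F,S}$-invariant line, on which the action is a continuous character $\psi_x$ satisfying $\psi_x(\Frob_q)=\zeta_q q^i$ with $\zeta_q$ a root of unity in $\Q_p$ for almost all $q$.

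Concretely, for $x\neq 0$ in $V_i$, I consider the set $H_x:=\{g\in G_{F,S}\mid gx\in \Q_p x\}$. This is a subgroup of $G_{F,S}$ (by multiplicativity) and closed, since $g\mapsto gx$ is continuous and the finite-dimensional line $\Q_p x\subset V$ is closed. By hypothesis $H_x$ contains $\Frob_q$ for almost all $q$, and Chebotarev density of Frobenius then forces $H_x=G_{F,S}$. So $\Q_p x$ is $G_{F,S}$-stable, with a continuous character $\psi_x$ of the stated form. Letting $\Psi_i$ denote the set of all such characters and $V^\psi\subset V$ the corresponding $\psi$-isotypic subspace, this identifies $V_i$ with $\sum_{\psi\in\Psi_i}V^\psi$, which is a subspace by construction.

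For (ii), $V_T=\sum_iV_i$ equals $\bigoplus_{\psi\in\bigcup_i\Psi_i}V^\psi$ (distinct characters give disjoint isotypic components), and on each summand $\Frob_q$ acts by the scalar $\psi(\Frob_q)$. Thus $\Frob_q$ is diagonalizable, hence semisimple, on $V_T$ for every $q$; the qualifier ``almost all $q$'' reflects only that each $\psi(\Frob_q)$ has the prescribed form $\zeta q^i$ outside the finite exceptional set attached to that $\psi$. The main obstacle is the rigidification step: showing that the pointwise eigenvector condition on $x$ is strong enough, via Chebotarev, to force $G_{F,S}$-invariance of the whole line $\Q_p x$. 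Once this is in hand, both parts of the proposition follow by routine representation-theoretic arguments.
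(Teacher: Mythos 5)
Your rigidification step is where the argument breaks down, and the failure turns on exactly the point that makes this proposition delicate: the root of unity $\zeta$ in the condition $\Frob_q x=\zeta q^i x$ depends on $q$ and on $x$. Grant for the moment that every nonzero $x\in V_i$ spans a $G_{F,S}$-stable line with character $\psi_x$ (your Chebotarev argument is essentially right, though $\Frob_q$ is only well defined up to conjugacy, so density of Frobenii really means density of the union of their conjugacy classes; to get $H_x=G_{F,S}$ you also need Jordan's lemma in the finite quotients, namely that a subgroup meeting every conjugacy class is the whole group). The problem is the next sentence, ``this identifies $V_i$ with $\sum_{\psi\in\Psi_i}V^\psi$''. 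What the rigidification actually yields is $V_i=\bigcup_{\psi\in\Psi_i}V^\psi$, a \emph{union} of isotypic subspaces, and the sum of two of them does not lie in $V_i$ in the literal sense: for $x\in V^\psi$, $y\in V^{\psi'}$ with $\psi\neq\psi'$, the vector $\Frob_q(x+y)=\psi(\Frob_q)x+\psi'(\Frob_q)y$ is not proportional to $x+y$ for the positive-density set of $q$ with $\psi(\Frob_q)\neq\psi'(\Frob_q)$. Concretely, let $G_{F,S}$ act on $V=\Q_p^2$ through $\Gal(L/\Q)\cong\{\pm1\}$ ($L$ a quadratic field unramified outside $S$) by $\mathrm{diag}(1,\chi)$ with $\chi$ the quadratic character, and take $i=0$: then $e_1,e_2$ are eigenvectors of every $\Frob_q$ with root-of-unity eigenvalues, but $e_1+e_2$ is an eigenvector only for the split primes. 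So under the literal reading your identification is false, and so would be part (i) itself.

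The statement is true under the reading implicit in the paper's one-line proof: $x\in V_i$ means that for almost all $q$ one has $\Frob_q^{n}x=q^{ni}x$ for some $n=n(x,q)\geq 1$, equivalently $x\in\sum_{\zeta}\ker(\Frob_q-\zeta q^i)$, the sum of the eigenspaces with eigenvalue of the prescribed shape. For each $q$ the set $\bigcup_n\ker(\Frob_q^n-q^{ni})$ is a directed union of subspaces, hence a subspace, and $V_i$ is the intersection of these over almost all $q$; that is exactly the content of the paper's computation $\Frob_q^{n(x)n(y)}(x+y)=q^{n(x)n(y)i}(x+y)$. But with this reading your rigidification fails at the first step: an element of $\sum_\zeta\ker(\Frob_q-\zeta q^i)$ need not span a $\Frob_q$-stable line (again $e_1+e_2$ above), so there is no character $\psi_x$ to extract, and Chebotarev has nothing to act on. Part (ii) inherits the same gap: your argument gives semisimplicity only on $\bigoplus_\psi V^\psi$, which need not exhaust $V_T$. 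The paper instead invokes \cite[Prop.\ (1.2)]{KL2}; note that under the corrected reading, semisimplicity of $\Frob_q$ on $V_i$ also follows directly because $\Frob_q$ there is annihilated by $\prod_\zeta(T-\zeta q^i)$, a polynomial with distinct roots.
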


\begin{proof}
(i) If $x$ and $y$ are in $V_i$, then for almost all $q$,
$(\Frob_q)^{n(x)}=q^{n(x)i}x$ for some integer $n(x)$, 
and similarly for $y$; so $\Frob_q^{n(x)n(y)}(x+y)=q^{n(x)n(y)i}(x+y)$,
whence $x+y\in V_i$.

(ii) The proof of \cite[Prop (1.2)]{KL2} shows that $\Frob_v$ acts semisimply on
$V_i$, and hence on $V_T:=\sum_iV_i$.
\end{proof}

It follows from Proposition \ref{prop:sspart} and \eqref{eq:hdm} that 
$\dim \left( H_c^j(X,\Q_p)\right)_d\leq h^{d,d}(j)$. 

When X is smooth and projective the space $V_T$ (which in this case
consists just of a single $V_i$) is the subject of 
the Tate conjecture, which asserts that it should be the subspace spanned by cycle classes. 
Thus equality above would mean that the cohomology is
spanned by cycle classes. This is satisfied only in certain
special cases - for example if X has a stratification by affine spaces.

\end{document}